\numberwithin{equation}{section}
\renewcommand\S{§}
\newcommand{\al}{\alpha}
\newcommand{\Om}{\Omega}
\newcommand{\om}{\omega}
\newcommand{\D}{\Delta}
\newcommand{\La}{\Lambda}
\newcommand{\la}{\lambda}
\def\C{\mathbb{C}}
\newcommand{\Z}{\mathbb{Z}}
\newcommand{\CA}{\mathcal{A}}
\newcommand{\CD}{\mathcal{D}}
\newcommand{\E}{\mathcal{E}}
\newcommand{\OO}{\mathcal{O}}
\newcommand{\g}{\mathfrak{g}}
\newcommand{\ov}[1]{\overline{#1}}
\newcommand{\what}[1]{\widehat{#1}}
\newcommand{\wtil}[1]{\widetilde{#1}}
\DeclareMathOperator{\gr}{gr}
\newcommand{\filf}[1]{{F^{#1}}}
\newcommand{\grf}[1]{{\gr_F^{#1}}}
\newcommand{\filg}[1]{{G_{#1}}}
\newcommand{\grg}[1]{{\gr^G_{#1}}}
\newcommand{\filw}[1]{{W_{#1}}}
\newcommand{\file}[1]{{E^{#1}}}
\newcommand{\gre}[1]{{\gr_E^{#1}}}
\newcommand{\X}{\Xi}
\newcommand{\vac}{\mathbf{1}}
\newcommand{\Hch}{H^{\text{ch}}}
\newcommand{\dR}{\text{dR}}
\newcommand{\ovy}{\overline{Y}}
\newcommand{\ovX}{X}
\DeclareMathOperator{\Ran}{Ran}
\DeclareMathOperator{\Hoch}{HH}
\DeclareMathOperator{\res}{res}
\DeclareMathOperator{\zhu}{Zhu}
\DeclareMathOperator{\HP}{HP}
\newtheoremstyle{exps}{\topsep}{\topsep}{}{0pt}{\bfseries}{.}{0pt}{}
\newtheorem*{thm*}{Theorem}
\newtheorem*{prop*}{Proposition}
\newtheorem*{lem*}{Lemma}
\newtheorem*{cor*}{Corollary}
\newtheorem*{rem*}{Remark}
\newtheorem{thm}{Theorem}[section]
\newtheorem{prop}[thm]{Proposition}
\theoremstyle{definition}
\newtheorem*{defn*}{Definition}
\newtheorem*{exer*}{Exercise}
\newtheorem{defn}[thm]{Definition}
\newtheorem*{problem*}{Problem}
\newtheorem{rem}[thm]{Remark}
\newtheorem{nolabel}[thm]{ }
\theoremstyle{exps}
\title[]{A Hodge filtration on chiral homology and {P}oisson homology of associated schemes}
\begin{document}

\begin{center}
{\LARGE \bf A Hodge filtration on chiral homology and Poisson homology of associated schemes} \par \bigskip

\renewcommand*{\thefootnote}{\fnsymbol{footnote}}
{\normalsize
	Jethro van Ekeren\footnote{email: \texttt{jethro@impa.br}} and
	Reimundo Heluani
}

\par \bigskip
{\footnotesize Instituto de Matem\'{a}tica Pura e Aplicada \\
Rio de Janeiro, RJ, Brazil}

\par \bigskip

\vspace*{10mm}

\emph{Dedicated to our teacher, Victor G. Kac, on the occasion of his 80th birthday}

\end{center}

\vspace*{10mm}

\noindent
\textbf{Abstract.} We introduce filtrations in chiral homology complexes of smooth elliptic curves, exploiting the mixed Hodge structure on cohomology groups of configuration spaces. We use these to relate the chiral homology of a smooth elliptic curve with coefficients in a vertex algebra with the Poisson homology of the associated Poisson scheme. As an application we deduce finite dimensionality results for chiral homology in low degrees.

\vspace*{3mm}

{\footnotesize\texttt{MSC Identifiers: 17B69; 14H52; 32S35; 17B63}}

\vspace*{10mm}

\section{Introduction}

\begin{nolabel}
Let $X$ be a smooth complex algebraic curve and $x_1, \ldots, x_n$ a sequence of marked points of $X$. Let $\g$ be a finite dimensional simple Lie algebra and $\widehat{\g}$ the corresponding affine Lie algebra, and let $M_1, \ldots, M_n$ be a sequence of $\what\g$-modules of a given level $k$. From these data a certain vector space of coinvariants may be constructed, namely the coinvariants of $\bigotimes_{i=1}^n M_i$ relative to the action of the Lie subalgebra $\what{\g}_{\text{out}} \subset \what\g$ determined by functions with poles only at $x_1, \ldots x_n$.
\end{nolabel}

\begin{nolabel}
If the $\what\g$-modules $M_i$ are subject to appropriate hypotheses, for example that each one be finitely generated and integrable, then the space of coinvariants is finite dimensional. Furthermore the dual vector spaces, considered for varying $x_1, \ldots, x_n$ and varying $X$, assemble to form vector bundles with connection, known as conformal blocks, over moduli spaces of smooth pointed curves \cite{zhu96, TUY, DGT}. Conformal blocks, especially their extension to moduli spaces of stable curves, have been intensively studied due to their significance in both algebraic geometry and representation theory.
For instance braiding and associativity morphisms for the Kazhdan-Lusztig tensor product are defined in terms of monodromy and asymptotics of flat sections of these bundles \cite{KL1-4}.
\end{nolabel}

\begin{nolabel}
The theory of vertex algebras and chiral algebras provides a natural and more general context for the conformal blocks construction. For example the case described above is recovered upon specialisation to the case of the simple vertex algebra $L_k(\g)$.
\end{nolabel}

\begin{nolabel}
The Zhu Poisson algebra $R_V$ of a vertex algebra $V$ is the quotient vector space $R_V = V / V_{(-2)}V$, with commutative product induced by the $(-1)$-product in $V$ and Poisson bracket induced by the $(0)$-product in $V$. The vertex algebra $V$ is said to be $C_2$-cofinite precisely if $\dim R_V < \infty$ \cite{zhu96}. The $C_2$-cofiniteness condition of Zhu is related to integrability in the case of $L_k(\g)$ \cite{DM06} and is a useful sufficient condition for finite dimensionality of spaces of coinvariants in general \cite[Theorem 4.7]{AN} \cite[Proposition 5.1.1]{DGT}. Zhu proved modularity of characters of rational $C_2$-cofinite vertex algebras by studying the conformal blocks of smooth elliptic curves. It was later observed in \cite{Arakawa.Kawasetsu} that in this particular case, finite dimensionality of coinvariants is ensured by a condition much weaker than finite dimensionality of $R_V$, namely finite dimensionality of the Poisson homology $\HP_0(R_V) = R_V / \{R_V, R_V\}$.
\end{nolabel}

\begin{nolabel}
The notions of chiral algebra, and chiral homology $\Hch_n(X, \CA)$ of an algebraic curve $X$ with coefficients in a chiral algebra $\CA$ over $X$, have been introduced by Beilinson and Drinfeld \cite{BD}. Spaces of coinvariants as above are recovered as the degree $0$ chiral homology. In our previous article \cite{EH21} we have worked out these constructions in the case of $X = E$ a smooth elliptic curve and $\CA_V$ the chiral algebra over $E$ associated with a vertex algebra $V$, and we have related them to the Hochschild homology of the associative algebra $\zhu(V)$. The main result of that article was that for a classically free vertex algebra $V$ the limit of $\Hch_1(E, \CA_V)$, as $E$ degenerates to a nodal curve, recovers $\Hoch_1(\zhu(V))$ the Hochschild homology of $\zhu(V)$ and in particular vanishes if $\zhu(V)$ is semisimple.
\end{nolabel}

\begin{nolabel}
The purpose of this article is to establish a link, valid in all degrees, between the chiral homology complex of a vertex algebra $V$ and the Lichnerowicz complex computing Poisson homology of $R_V$. This is done through the introduction of an appropriate filtration on the chiral complex and passage to its associated spectral sequence. In prior work \cite{EH21} we have used filtrations on the vertex algebra $V$ itself to approximate the discrepancy between $\Hch_1(E, \CA_V)$ and $\Hoch_1(\zhu(V))$, and the condition of classical freedom of $V$ arises in this context as a sufficient condition on $V$ guaranteeing vanishing of the discrepancy. In this article the filtration on $V$ is enhanced to a filtration on the whole chiral complex using the mixed Hodge structure on the cohomology groups of configuration spaces of the elliptic curve $E$. With this refinement, the relation between $\Hch_0(E, \CA_V)$ and $\HP_0(R_V)$ remarked upon above, is promoted to arbitrary degree. A precise statement is given in Theorem \ref{thm:chiral.contains.HP} below, and some consequences are deduced in Section \ref{sec:consequences}.
\end{nolabel}

\begin{nolabel}
\emph{Acknowledgements.} The authors would like to thank T. Arakawa, J. Francis, B. Pym, P. Safranov, T. Schedler and M. Szczesny for helpful discussions. JvE has been supported by Serrapilheira Institute grant Serra -- 2023-0001, FAPERJ grant 201.445/2021 and CNPq grant 310576/2020-2 and RH by FAPERJ CNE/2019 and CNPq grant 05688/2019-7.
\end{nolabel}

\section{Preliminaries on vertex algebras}

\begin{nolabel}
In this section we collect definitions and background material related to vertex algebras. Throughout the paper, vector spaces are over the field $\C$ of complex numbers, $\otimes$ means $\otimes_\C$ unless otherwise specified, and by algebra we mean $\C$-algebra. We use angle brackets $\left<\cdots\right>$ as notation for the $\C$-linear span of enclosed terms.
\end{nolabel}

\begin{defn}\label{def:va.defn}
% See Kac, VA for beginners, page 118
A vertex algebra is a vector space $V$ equipped with a set of bilinear products $V \times V \rightarrow V$, written $(a, b) \mapsto a_{(n)}b$ and indexed by the integers $n \in \Z$, and a nonzero vector $\vac \in V$ called the vacuum vector. The linear endomorphisms $a_{(n)}$ associated with $a \in V$, are called the modes of $a$. The formal series
\begin{align*}
Y(a, z) = \sum_{n \in \Z} a_{(n)} z^{-n-1}
\end{align*}
is called the field associated with $a$. The following axioms are to be satisfied. 
\begin{enumerate}
\item[(1)] (Quantum field property) For fixed $a, b \in V$, there exists $N$ such that $a_{(n)}b = 0$ for all $n \geq N$, i.e., $Y(a, z)b \in V((z))$,

\item[(2)] (Unit axiom) $Y(\vac, z) = I_V$ and $Y(a, z)\vac \in V[[z]]$ with $Y(a, z)\vac|_{z=0} = a$, for all $a \in V$,

\item[(3)] (Borcherds identity) For all $a, b, c \in V$ and $m, n, k \in \Z$ the following relation holds:
\begin{align}\label{eq:bor.iden}
\begin{split}
& \sum_{j \in \Z_{\geq 0}} \binom{m}{j} (a_{(n+j)}b)_{(m+k-j)} c \\
= { } & \sum_{j \in \Z_{\geq 0}} (-1)^j \binom{n}{j} \left( a_{(n+m-j)} b_{(k+j)} c - (-1)^n b_{(n+k-j)} a_{(m+j)} c \right).
\end{split}
\end{align}
\end{enumerate}
\end{defn}

\begin{nolabel}
There are alternative definitions of the notion of vertex algebra, involving structures which can in turn be recovered from Definition \ref{def:va.defn}. For example the translation operator $T : V \rightarrow V$ is given by $T(a) = a_{(-2)}\vac$, it satisfies the identity
\begin{align*}
[T, Y(a, z)] = \partial_z Y(a, z)
\end{align*}
for all $a \in V$. A useful consequence of the axioms is the skew-symmetry relation
\begin{align}\label{eq:skew-symm}
Y(b, z)a = e^{zT} Y(a, -z)b,
\end{align}
valid for all $a, b \in V$.
\end{nolabel}

\begin{defn}
A conformal vertex algebra of central charge $c$ is a vertex algebra $V$ equipped with a $\Z_{\geq 0}$-grading $V = \bigoplus_{\Delta \in \Z_{\geq 0}} V_\Delta$ and a distinguished vector $\om \in V_2$ whose modes
\begin{align*}
Y(\om, z) = L(z) = \sum_{n \in \Z} L_n z^{-n-2}
\end{align*}
furnish $V$ with a representation of the Virasoro Lie algebra as follows
\begin{align*}
[L_m, L_n] = (m-n)L_{m+n} + \delta_{m, -n} \frac{m^3-m}{12} c I_V.
\end{align*}
Furthermore this representation is to be compatible with the vertex algebra structure in the sense that $V_\D = \ker(L_0 - \D I_V)$ for all $\D \in \Z_{\geq 0}$, and $L_{-1} = T$.
\end{defn}

\begin{nolabel}
If we write $\D$ for the degree of a homogeneous element of the conformal vertex algebra $V$, i.e., $\D(a) = n$ for $a \in V_n$, then
\begin{align}\label{eq:graded.VA.formula}
\D(a_{(n)}b) = \D(a) + \D(b) - n - 1.
\end{align}
In particular $\D(Ta) = \D(a)+1$.
\end{nolabel}

\begin{defn}
A graded vertex algebra is a vertex algebra $V$ equipped with a $\Z_{\geq 0}$-grading $V = \bigoplus_{n \in \Z_{\geq 0}} V_n$ for which (1) $\vac \in V_0$, (2) $T : V_n \rightarrow V_{n+1}$, and (3) writing $\D(a) = n$ for $a \in V_n$ as above, the relation \eqref{eq:graded.VA.formula} holds for all homogeneous $a, b \in V$.
\end{defn}

\begin{nolabel}
The Borcherds identity \eqref{eq:bor.iden} can be cast in the following form:
\begin{align}\label{eq:bor.iden.2}
\begin{split}
& \res_{z=0} \res_{w=0} Y(a, z) Y(b, w) c \, i_{z, w} f(z, w) - \res_{z=0} \res_{w=0} Y(b, w) Y(a, z) c \, i_{w, z} f(z, w) \\
& = \res_{w=0} \res_{z-w=0} Y(Y(a, z-w)b, w) c \, i_{w, z-w} f(z, w)
\end{split}
\end{align}
for all
\begin{align*}
f(z, w) = z^m w^k (z-w)^n.
\end{align*}
For notational background on $\res_{z=0}$ and $i_{z, w}$, etc., we refer the reader to the book \cite{kac.book} or to any textbook on vertex algebras. We give a brief description for the sake of completeness. The formal residue symbol $\res_{z=0}$ means extraction of the coefficient of $z^{-1}$. The symbol $i_{z, w}$ means expansion of $f(z, w)$ as a Laurent series in $w$, more precisely, expansion in the field $\C((z))((w))$. Similarly $i_{w, z-w}$ represents expansion in $\C((w))((z-w))$.
\end{nolabel}

\begin{nolabel}
Clearly \eqref{eq:bor.iden.2} holds for all $f(z, w) \in \C[z^{\pm 1}, w^{\pm 1}, (z-w)^{-1}]$ by linearity. We remark that, at least when $V$ is a graded vertex algebra, this identity holds more generally for
\begin{align}\label{eq:Laurent.series.ring}
f(z, w) \in \C[[z, w]][z^{-1}, w^{-1}, (z-w)^{-1}].
\end{align}
It is straightforward to see why: fix such an expression $f(z, w)$ and let $M$ and $K$ be, respectively, the minimal exponents of $z$ and $w$ that appear in it. Also fix $a, b, c \in V$ and $n \in \Z$. There exists $A \in \Z$ such that both sides of \eqref{eq:bor.iden} vanish when $m, k \geq A$. For each $m \in \Z$ such that $M \leq m \leq A$ there exists $A' \in \Z$ such that for $k \geq A'$ both sides again vanish. If we assume $V$ to be graded then the same is true with the roles of $m$ and $k$ exchanged, since for fixed $a, b, c \in V$ and fixed $n, k \in \Z$ the conformal weight of $(a_{(n+j)}b)_{(m+k-j)}c$ becomes negative as $m$ becomes large and positive. It follows that only a finite number of the coefficients of $f(z, w)$ contribute nontrivially to \eqref{eq:bor.iden.2}. Thus this apparently more general form of Borcherds identity follows from the standard form.
\end{nolabel}

\begin{nolabel}
For the purposes of stating the Borcherds identity in contour integral form, we denote by $C_z^a(r)$ the contour $z = a + r e^{it}$ for $0 \leq t \leq 2\pi$ in the complex $z$-plane. Now let $0 < \varepsilon < r$. The contour integral formulation of Borcherds identity (see {\cite[Section 3.3.10]{FBZ}} for instance) asserts
\begin{align}\label{eq:bor.contour}
\begin{split}
& \oint_{C_z^{0}(r+\varepsilon)} \oint_{C_w^0(r)} f(z, w) Y(a, z) Y(b, w) c \, dw \, dz - \oint_{C_w^{0}(r)} \oint_{C_z^0(r-\varepsilon)} f(z, w) Y(b, w) Y(a, z) c \, dz \, dw \\
= {} & \oint_{C_w^0(r)} \oint_{C_z^{w}(\varepsilon)} f(z, w) Y(Y(a, z-w)b, w)c \, dz \, dw,
\end{split}
\end{align}
for all $a, b, c \in V$ and $f(z, w)$ of the form $z^m w^k (z-w)^n$. As we have seen above, the formal analogue \eqref{eq:bor.iden.2} continues to hold for $f(z, w) \in \C[[z, w]][z^{-1}, w^{-1}, (z-w)^{-1}]$, even if this series has zero radius of convergence. If $f(z, w)$ is meromorphic with poles on $z=0$, $w=0$ and $z=w$, then it has a series expansion as in  \eqref{eq:Laurent.series.ring} and the three terms in \eqref{eq:bor.contour} make sense and the equality is verified.
\end{nolabel}

\begin{nolabel}
We recall some material about filtrations on (conformal) vertex algebras from \cite{Li.filtration} and \cite{Arakawa.C2}. Let $V$ be a vertex algebra. A set $\{a^i \mid i \in I\} \subset V$ of nonzero vectors $a^i$ is said to be a strong generating set, if $V$ is spanned by the vectors
\begin{align}\label{eq:monom}
a^{i_1}_{(-n_1-1)} a^{i_2}_{(-n_2-1)} \cdots a^{i_s}_{(-n_s-1)}\vac,
\end{align}
where $i_j \in I$ and $n_j \in \Z_{\geq 0}$ for $1 \leq j \leq s$.
\end{nolabel}

\begin{defn}%[{\cite[Definition 4.1]{Li.filtr}]
Let $V$ be a vertex algebra. An increasing filtration of $V$ by vector subspaces $\filg{p}V$ indexed by $p \in \Z$, is said to be a \emph{good increasing filtration} if (1) it is exhaustive (each vector $v \in V$ is contained in $\filg{p}V$ for some $p \in \Z$), and (2) for all $a \in \filg{p}V$ and $b \in \filg{q}V$ we have $a_{(n)}b \in \filg{p+q}V$ for all $n \in \Z$ and $a_{(n)}b \in \filg{p+q-1}V$ for all $n \in \Z_{\geq 0}$.
\end{defn}

\begin{defn}
Let $V$ be a conformal vertex algebra, and let $\{a^i \mid i \in I\}$ be a strong generating set for $V$, in which all $a^i$ are homogeneous with respect to the conformal structure of $V$. Denoting by $\filg{p}V$ the span in $V$ of all vectors of the form \eqref{eq:monom}, in which $\D(a^{i_1}) + \cdots + \D(a^{i_s}) \leq p$, defines a good increasing filtration on $V$. This is called the standard filtration on $V$ relative to the chosen strong generating set.
\end{defn}

\begin{defn}[The Li filtration {\cite{Li.abelianizing}}]
Let $V$ be a vertex algebra. Denote by $\filf{p}V$ the span in $V$ of all vectors of the form
\begin{align*}
a^{i_1}_{(-n_1-1)} a^{i_2}_{(-n_2-1)} \cdots a^{i_s}_{(-n_s-1)}b,
\end{align*}
(here the $a^{i}$ and $b$ are arbitrary elements of $V$) in which $n_1 + \cdots + n_s \geq p$. This defines a decreasing filtration on $V$.
\end{defn}

\begin{nolabel}
If $\filg{}$ is a good increasing filtration on $V$ then the associated graded $\grg{}(V)$ acquires the structure of a commutative algebra. The commutative associative product is that induced from $(a, b) \mapsto a_{(-1)}b$, and the unit is the image in $\grg{0}(V)$ of the vacuum vector $\vac$. The translation operator $T$ induces a derivation of degree $0$ on $\grg{}(V)$. As for the Li filtration, the associated graded $\grf{}(V)$ becomes a commutative algebra with derivation in the same way, only now the derivation $T$ has degree $+1$.
\end{nolabel}

\begin{nolabel}\label{nolabel:def.RV}
The vector space $\grf{0}(V)$ is a subalgebra of the commutative algebra $\grf{}(V)$, not closed under the derivation $T$. In fact $\filf{1}V = V_{(-2)}V$, and the commutative algebra $\grf{0}(V) = V / \filf{1}V = V / V_{(-2)}V$ carries a natural structure of Poisson algebra, with Poisson bracket given by $\{a, b\} = a_{(0)}b$ \cite{zhu96}. This is known as the Zhu Poisson algebra of $V$ and we denote it $R_V$.
\end{nolabel}

\begin{nolabel}
Both of the associated graded algebras $\grf{}(V)$ and $\grg{}(V)$ carry natural structures of Poisson vertex algebra. The definition of this notion, which we do not reproduce here, can be found in \cite{Li.filtration}. It was shown by Arakawa \cite{Arakawa.C2} that $\filf{p}V_n = \filg{n-p}V_n$ for all $p$ and $n$, and that
\begin{align*}
\grf{}(V) \cong \grg{}(V)
\end{align*}
as Poisson vertex algebras. In particular all standard filtrations on $V$ are good filtrations, and they are all equivalent \cite{Li.filtration}.
\end{nolabel}

\begin{nolabel}
We recall the arc scheme construction (or rather ``arc algebra'' construction, since we work solely with coordinate rings rather than their associated affine schemes). Informally the arc algebra $(JR, \partial)$ of a commutative $\C$-algebra $R$ is obtained by adjoining a derivation $\partial$ to $R$ in the freest possible way. If $R$ is presented as a polynomial algebra in variables $x_1, \ldots, x_n$, modulo the ideal generated by polynomials $f_1, \ldots, f_s$, then $JR$ is the algebra of polynomials in $x_i, \partial x_i, \partial^2 x_i, \ldots$ for $i=1, \ldots, n$, modulo the ideal generated by $f_j, \partial f_j, \partial^2 f_j, \ldots$ for $j=1, \ldots, s$. Here $\partial{f_j}$, etc., is interpreted via formal application of the Leibniz rule: for example $\partial(x^2 y) = 2x y \partial{x} + x^2 \partial{y}$, etc. Finally $\partial : JR \rightarrow JR$ is the unique derivation satisfying $\partial(\partial^kx_i) = \partial^{k+1}{x_i}$.
\end{nolabel}

\begin{nolabel}
If $R$ is a Poisson algebra, then the arc algebra $JR$ acquires a natural structure of Poisson vertex algebra with $\partial$ as translation operator. There exists therefore a canonical morphism
\begin{align*}
JR_V \rightarrow \grf{}(V)
\end{align*}
of commutative algebras with derivation, which is in fact a morphism of Poisson vertex algebras. By \cite[Lemma 4.1]{Li.abelianizing} this morphism is always surjective. If this morphism is also injective, we say that $V$ is ``classically free'' \cite[Section 1.1]{AEH23}. The condition of classical freedom has a simple homological interpretation, which was given in {\cite[Section 13]{EH21}}. First we recall the general construction of K\"{a}hler differentials.
\end{nolabel}

\begin{defn}\label{defn:Kahler}
Let $k$ be a field and $R$ a commutative $k$-algebra. The K\"{a}hler differentials of $R$ denotes the $R$-module
\begin{align*}
\Omega_{R} = \Omega_{R/k} = (R \otimes_k R) / S, \qquad S = \left< ax \otimes y + ay \otimes x - a \otimes xy \mid a, x, y \in R \right>.
\end{align*}
The action of $R$ on $\Omega$ is by multiplication on the first tensor factor. The image of $a \otimes x$ is denoted $a \, dx$.
\end{defn}

\begin{nolabel}\label{nolabel:Koszul.complex}
Denote by $A$ the associated graded algebra $\grf{}(V)$ and $T$ the derivation of degree $+1$ which it inherits. We apply the construction of K\"{a}hler differentials to $R = A$, considered as a $\C$-algebra, and then form the exterior algebra $\Omega_A^\bullet = \wedge^\bullet_A \Omega_A$. The extra structure of the derivation $T$ allows us to turn $\Omega_A^\bullet$ into a homological complex, with differential $d$ defined to be the unique derivation satisfying
\begin{align*}
d(b \, da) = (Ta) \cdot b.
\end{align*}
In other words $d$ is contraction with the vector field $T$, and is explicitly given by
\begin{align*}
d(b \, da^1 \wedge da^2) = (Ta^1) b \, da^2 - (Ta^2)b \, da^1,
\end{align*}
etc. We call this the Koszul complex of the pair $(A, T)$. We have the following result.
\end{nolabel}

\begin{thm}[{\cite[Theorem 13.7]{EH21}}]\label{thm:classically.free.koszul}
Let $V$ be a finitely strongly generated vertex algebra and let $A = \grf{}(V)$ and $T$ be as above. The first Koszul homology $H_1(A, T) = H_1(\Om_A^\bullet, d)$ vanishes if and only if $V$ is classically free.
\end{thm}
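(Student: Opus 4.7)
The strategy is to relate the vanishing of $H_1(A,T)$ to the vanishing of $K := \ker\phi$, where $\phi \colon JR_V \twoheadrightarrow A$ is the canonical surjection of commutative algebras with derivation; classical freedom is by definition the condition $K = 0$. Both $JR_V$ and $A = \grf{}(V)$ are naturally $\Z_{\geq 0}$-graded with $T$ of degree $+1$ and degree-$0$ part $R_V$, and $\phi$ is a graded map. The Koszul complex $\Om_A^\bullet$ inherits a grading (I write $\Om_A^p[n]$ for the $A$-degree $n$ part), with differential $d$ of $A$-degree $+1$. The plan is to compare $H_1(A,T)$ with $K$ degree by degree.

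The first thing to do is examine the bottom of the Koszul complex. One has $\Om_A^1[0] = \Om_{R_V}$ since $A^0 = R_V$, and $\Om_A^2[-1] = 0$ as $A$ is non-negatively graded. Under the natural $R_V$-module identification $JR_V^1 \cong \Om_{R_V}$ (sending $b \cdot \partial r$ to $b\, dr$), the differential $d \colon \Om_{R_V} = \Om_A^1[0] \to A^1 = JR_V^1/K^1$ is precisely the quotient map, hence $H_1(A,T)[0] = K^1$.

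The key technical ingredient will be the purely algebraic statement that for any finitely generated commutative $\C$-algebra $R$, $H_1(JR, \partial) = 0$. To prove this, I would present $R = \C[x_1, \ldots, x_\ell]/(f_1, \ldots, f_s)$ and consider the polynomial arc algebra $S = \C[\partial^k x_i]_{k \geq 0,\, i}$. The elements $\{\partial^{k+1} x_i\}$ form a subset of the polynomial variables of $S$, hence a regular sequence, and $(\Om_S^\bullet, d)$ coincides with the Koszul complex on this sequence, so $H_p(\Om_S^\bullet,d) = 0$ for $p \geq 1$. Descent to the quotient $JR = S/I^\partial$, where $I^\partial$ is the differential ideal generated by the $f_j$, uses that $\partial$ preserves $I^\partial$ and proceeds via a spectral sequence argument (for instance, the one associated with the mapping cone of $I^\partial \hookrightarrow S$).

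With this lemma in hand, the induction runs cleanly. Assume $K^m = 0$ for $m \leq n$. Then $A$ and $JR_V$ agree in degrees $\leq n$, so $\Om_A^p[m] = \Om_{JR_V}^p[m]$ for $m \leq n$ (the Leibniz relations defining $\Om^p$ in degree $m$ involve only elements of degree $\leq m$). The degree-$n$ piece of the Koszul complex of $A$ thereby takes the form
\[
\Om_{JR_V}^2[n-1] \xrightarrow{d_2} \Om_{JR_V}^1[n] \xrightarrow{d_1^A} JR_V^{n+1}/K^{n+1},
\]
with $d_1^A$ the composition of $d_1^{JR_V}$ with the quotient. The map $d_1^{JR_V}$ surjects onto $JR_V^{n+1}$ for $n \geq 0$, since any monomial of positive degree in $JR_V$ lies in its image. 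Combining this surjectivity with the identity $\ker(d_1^{JR_V}) = \operatorname{im}(d_2^{JR_V})$ (from the lemma), a short diagram chase yields the natural isomorphism $H_1(A,T)[n] \cong K^{n+1}$.

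The ``only if'' direction then follows immediately by induction on $n$: $H_1(A,T) = 0$ forces $K^{n+1} = 0$ for every $n \geq 0$, so $K = 0$. The ``if'' direction is also immediate: if $V$ is classically free then $A = JR_V$, and the lemma gives $H_1(A,T) = 0$ directly. The main obstacle is the lemma $H_1(JR,\partial) = 0$ for arbitrary $R$; the free case is transparent via a regular-sequence computation, but the descent to $JR = S/I^\partial$ requires a careful analysis of the interaction of $I^\partial$ with the Koszul differential.
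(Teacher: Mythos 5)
The paper does not actually prove this statement; it is quoted verbatim from \cite[Theorem 13.7]{EH21}, so there is no in-text proof to compare against. Judged on its own terms, your architecture is correct and, as far as I can reconstruct, close in spirit to the argument of \cite{EH21}: the grading bookkeeping is right ($\Omega^p_A[m]$ depends only on the partial algebra $A^{\leq m}$, so the inductive hypothesis $K^{\leq n}=0$ really does let you replace the degree-$n$ strand of the Koszul complex of $A$ by that of $JR_V$), the surjectivity of $d_1^{JR_V}$ onto $(JR_V)^{n+1}$ for $n\geq 0$ is correct, and granting the lemma the diagram chase $H_1(A,T)[n]\cong (d_1^{JR_V})^{-1}(K^{n+1})/\ker(d_1^{JR_V})\cong K^{n+1}$ is clean. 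Both directions of the theorem then follow as you say.

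The one genuine gap is the descent step in your key lemma, which you flag but do not execute, and your proposed tool (a spectral sequence for the mapping cone of $I^\partial\hookrightarrow S$) will not do the job by itself: Koszul homology does not descend along arbitrary quotients, and the specific differential structure of $I^\partial$ is what saves you. Here is how to close it. Given a $1$-cycle $\omega\in\Omega_{JR}$, lift it to $\hat\omega\in\Omega_S$ (possible since $\Omega_{JR}$ is a quotient of the free module $\Omega_S\otimes_S JR$), so that $\iota_\partial\hat\omega\in I^\partial$. Working in a fixed degree $N\geq 1$, write $\iota_\partial\hat\omega=\sum_j c_{0,j}f_j+\sum_{l\geq 1,j}c_{l,j}\,\partial^l f_j$. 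The terms with $l\geq 1$ equal $\iota_\partial\bigl(c_{l,j}\,d(\partial^{l-1}f_j)\bigr)$, and $d(\partial^{l-1}f_j)$ dies in $\Omega_{JR}$ by the conormal relation. The delicate terms are the $l=0$ ones, since $f_j$ itself is \emph{not} in the image of $\iota_\partial$; but because $\iota_\partial$ raises degree by at least $1$, each $c_{0,j}$ has positive degree, hence lies in the ideal $(\partial^{\geq 1}x_i)=\operatorname{im}(\iota_\partial)$, say $c_{0,j}=\iota_\partial(\gamma_j)$, and then $c_{0,j}f_j=\iota_\partial(f_j\gamma_j)$ with $f_j\gamma_j$ dying in $\Omega_S\otimes_S JR$. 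Subtracting these corrections from $\hat\omega$ produces an honest $1$-cycle in $\Omega_S$ with the same image $\omega$, which is a boundary by your regular-sequence computation; pushing the bounding $2$-form down to $\wedge^2\Omega_{JR}$ finishes the proof that $H_1(JR,\partial)=0$. With this paragraph supplied, your proof is complete.
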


\section{Prelude on Leibniz algebras}\label{sec:Leibniz}

\begin{nolabel}
In the next section we introduce a homological complex computing a variant of the Beilinson-Drinfeld chiral homology of a smooth elliptic curve. Before introducing and studying this complex, which is essentially the Chevalley-Eilenberg complex of a Lie algebra, it will be instructive to examine a generalisation of Lie algebra homology to the class of Leibniz algebras, introduced by Loday.
\end{nolabel}

\begin{defn}
A Leibniz algebra is a vector space $\g$ equipped with a bilinear operation $[,] : \g \times \g \rightarrow \g$ satisfying the Jacobi identity
\begin{align}\label{eq:Leiniz.alg.def}
[x, [y, z]] = [[x, y], z] + [y, [x, z]]
\end{align}
for all $x, y, z \in \g$.
\end{defn}

\begin{nolabel}
A Lie algebra is of course a Leibniz algebra in which the skew-symmetry identity $[b, a] = -[a, b]$ holds. Apparently Loday \cite{Loday.93} was the first to observe that the Chevalley-Eilenberg complex $(\wedge^\bullet \g, d)$ for homology of a Lie algebra $\g$ admits a lift to a complex $(\g^{\otimes \bullet}, d)$ which makes sense even without the skew-symmetry identity.
\end{nolabel}

\begin{nolabel}
Let $(\g, [,])$ be a Leibniz algebra. We equip the vector spaces $C_n = \g^{\otimes n}$ with the structure of a homological complex via the differentials
\begin{align*}
d = \sum_{1 \leq i < j \leq n} (-1)^{n-i} d^{(ij)}, % + \sum_{1 \leq i \leq n} (-1)^{n-i} p^{(i)}
\end{align*}
where
\begin{align*}
d^{(ij)}\left( a^1 \otimes \cdots \otimes a^n \right)= a^1 \otimes \cdots \otimes \widehat{a}^i \otimes \cdots \otimes a^{j-1} \otimes [a^i, a^j] \otimes a^{j+1} \otimes \cdots \otimes a^n.
\end{align*}
This definition can also easily be generalised to encompass an action of $\g$ on a module. If $\g$ is a Lie algebra then we can consider the action of the symmetric group $\Sigma_n$ on $C_n$ by the sign permutation representation. The differential $d$ defined above, is well defined upon passage to the spaces of coinvariants $(C_n)_{\Sigma_n}$, recovering the Chevalley-Eilenberg complex $(\wedge^\bullet\g, d)$.
\end{nolabel}

\begin{nolabel}
Although it is a routine computation, we sketch the proof that $d \circ d = 0$ in Loday's complex. First consider the component of the composition $C_{n+2} \rightarrow C_n$ in which factor $a^i$ is applied to factor $a^j$ and then factor $a^k$ is applied to factor $a^\ell$ (where all four indices are distinct). This is $\pm 1$ times
\begin{align}\label{eq:crossing}
a^1 \otimes \cdots \otimes [a^{i}, a^{j}] \otimes \cdots  \otimes [a^{k}, a^{\ell}] \otimes \cdots \otimes a^{n+2}.
\end{align}
The same term, up to a sign, appears when performing the same two operations in the opposite order. We wish to see that it appears with opposite sign in fact. Take, without loss of generality, $i < k$, then the first composition appears with sign $(-1)^{(n+2-i) + (n+1-k)}$ while the second composition has sign $(-1)^{(n+2-k) + (n-i)}$ since, effectively, the block $(a^{i}, \ldots, a^{n+2})$ is shortened by one upon merging of $a^{k}$ with $a^{\ell}$. Thus the two terms cancel, as required.

It remains to check that, for $i < j < k$, the three terms $[[a^i, a^j], a^k]$, $[a^i, [a^j, a^k]]$ and $[a^j, [a^i, a^k]]$ cancel. Indeed these terms appear with signs 
\begin{align*}
(-1)^{(n+2-i)+(n+1-j)}, \quad (-1)^{(n+2-j)+(n-i)} \quad \text{and} \quad (-1)^{(n+2-i)+(n+1-j)},
\end{align*}
respectively, and thus cancel precisely because of \eqref{eq:Leiniz.alg.def}.
\end{nolabel}

\section{A chiral homology complex}

\begin{nolabel}
In this section we attach a homological complex to the data of a graded vertex algebra $V$ and a smooth elliptic curve $E$. If $V$ is conformal or quasiconformal then, via the construction presented in the book \cite{FBZ}, one obtains a chiral algebra over $E$ invariant under arbitrary local conformal transformations. Indeed the construction yields a chiral algebra over each smooth complex algebraic curve in a uniform way. Our complex is essentially the Beilinson-Drinfeld chiral homology complex of $E$ with coefficients in this chiral algebra \cite{BD} (see Remark \ref{rem:comparison.BD} below for further details). We remark that our complex makes sense even if $V$ is not quasiconformal. In this more general situation there is no longer an associated conformally invariant chiral algebra, but one still has a translation invariant chiral algebra on $E$. Our complex may then be interpreted geometrically as the chiral homology complex of this translation invariant chiral algebra over $E$.
\end{nolabel}

\begin{nolabel}\label{nolabel:algebraic.coords.E}
Let $\tau$ be a complex number with positive imaginary part, $\La \subset \C$ the lattice $\Z + \Z \tau$, and let $E$ be the smooth complex variety $\C / \La$ with marked point $0$. This is a smooth complex curve of genus $1$ with a marked point, in other words an elliptic curve. As a projective variety $E$ is the locus of
\begin{align*}
V^2 Z = 4 U^3 - 60G_4(\tau) U Z^2 - 140G_6(\tau) Z^3
\end{align*}
in homogeneous coordinates $[U, V, Z]$ on $\C P^2$. Here the $G_{2k}(\tau)$ are Eisenstein series, given by the following series expansion in general:
\begin{align*}
G_{2k}(\tau) = (-1)^{k+1} \frac{(2\pi)^{2k} B_{2k}}{(2k)!} \left(1 - \frac{4k}{B_{2k}} \sum_{n=1}^\infty \frac{n^{2k-1}q^n}{1-q^n} \right), \qquad q = e^{2\pi i \tau}.
\end{align*}
An explicit isomorphism between the models is given by $x \mapsto [u : v : 1]$ for $x \neq 0$, where $u = \wp(x)$ and $v = \wp'(x)$, and by $0 \mapsto [0 : 1 : 0]$. Here the Weierstrass function $\wp(x)$ is the $\Lambda$-periodic meromorphic function on $\C$ whose Laurent series expansion at $x=0$ is given by
\begin{align*}
\wp(x) = x^{-2} + \sum_{k = 1}^\infty (2k+1) G_{2k+2}(\tau) x^{2k}.
\end{align*}
The Weierstrass $\wp$-function has poles of order $2$ on the points of $\Lambda$.

The canonical bundle $\Omega_E$ of $E$ is trivial with global section $dx = du / v$ and the corresponding global section of the tangent bundle $\Theta_E$ is the derivation
\begin{align*}
\partial_x = v \partial_u + \frac{P'(u)}{2} \partial_v,
\end{align*}
where $P(u) = 4 u^3 - 60G_4(\tau) u - 140G_6(\tau)$. We denote by $\ovX$ the affine algebraic curve $E \backslash \{0\}$, and by $\Gamma_n$ the coordinate ring of the configuration space
\begin{align*}
F(\ovX, n) = \{(x_1, \ldots, x_n) \in {\ovX}^n \mid \text{$x_i \neq x_j$ for all $i \neq j$}\}.
\end{align*}
We denote by $\partial_i$, for $i = 1, \ldots, n$, the pullback of $\partial_x$ along the $i^{\text{th}}$ projection, considered as a derivation of $\Gamma_n$. 
\end{nolabel}

\begin{nolabel}
Let $V$ be a graded vertex algebra. We consider the vector space $\Gamma_n \otimes V^{\otimes n+1}$. For $i = 1, \ldots n$, we write $T_i$ for the action of the translation operator on the $i^{\text{th}}$ tensor factor of $V^{\otimes n+1}$. We consider the vector spaces
\begin{align}\label{eq:C.complex.def}
\widetilde{C}_n = \Gamma_n \otimes V^{\otimes n+1}, \quad \text{and} \quad C_n = \widetilde{C}_n / \sum_{i=1}^n (\partial_i + T_i) \widetilde{C}_n.    
\end{align}
We shall write down linear maps $d : \widetilde{C}_n \rightarrow \widetilde{C}_{n-1}$, check that they descend to well defined maps $d : {C}_n \rightarrow {C}_{n-1}$, and finally prove that $({C}_\bullet, d)$ is a complex.

We set
\begin{align}\label{eq:general.diff.def}
d = \sum_{1 \leq i < j \leq n} (-1)^{n-i} d^{(ij)} + \sum_{1 \leq i \leq n} (-1)^{n-i} p^{(i)}
\end{align}
where
\begin{align}\label{eq:general.diff.def.comp1}
\begin{split}
d^{(ij)} & \left( f(x_1, \ldots, x_n) a^1 \otimes \cdots \otimes a^n \otimes b\right) \\
= {} & \res_{x_i=x_j} f(x_1, \ldots, x_n) a^1 \otimes \cdots \otimes \widehat{a}^i \otimes \cdots \\
&\left. \cdots \otimes Y(a^i, x_i-x_j) a^j \otimes \cdots \otimes a^n \otimes b\right|_{(x_1, \ldots, \widehat{x}_i, \ldots, x_n) = (x_1, \ldots, x_{n-1})}
\end{split}
\end{align}
and
\begin{align}\label{eq:general.diff.def.comp2}
\begin{split}
 p^{(i)} & \left(f(x_1, \ldots, x_n) a^1 \otimes \cdots \otimes a^n \otimes b\right) \\
= {} & \res_{x_i=0} f(x_1, \ldots, x_n)  \left. a^1 \otimes \cdots \otimes \widehat{a}^i \otimes \cdots \otimes a^n \otimes Y(a^i, x_i) b \right|_{(x_1, \ldots, \widehat{x}_i, \ldots, x_n) = (x_1, \ldots, x_{n-1})}.
\end{split}
\end{align}
For instance in degrees $1$ and $2$ we have
\begin{align*}
d\left( f(x_1) a \otimes b \right) = {} & \res_{x_1=0} f(x_1) Y(a, x_1)b, \\
\text{and} \quad d\left( f(x_1, x_2) a^1 \otimes a^2 \otimes b \right)
= {} & -\res_{x_1=0} f(x_1, x_2) a^2 \otimes Y(a^1, x_1)b |_{x_2=x_1} \\
&+ \res_{x_2=0} f(x_1, x_2) a^1 \otimes Y(a^2, x_2)b |_{x_1=x_1} \\
&- \res_{x_1=x_2} f(x_1, x_2) Y(a^1, x_1-x_2) a^2 \otimes b |_{x_2=x_1}.
\end{align*}
%First we check that $d$, as defined above on $\widetilde{C}_n$, descends to the quotient $C_n$. Then we will show that we obtain a complex.
\end{nolabel}

\begin{nolabel}
Before confirming that $d$ descends to the quotient $C_\bullet$, it is worth clarifying the definition of $d$ in $\wtil{C}_\bullet$. In particular we should confirm that the residues appearing in \eqref{eq:general.diff.def.comp1} and \eqref{eq:general.diff.def.comp2} all lie, in fact, in $\Gamma_{n-1}$. Consider for example
\begin{align*}
\res_{x_i=x_j} f(x_i, x_j) Y(a^i, x_i-x_j) a^j,
\end{align*}
where, for notational clarity, we suppress the dependence on variables other than $x_i$, $x_j$. We have the Laurent series expansion
\begin{align}\label{eq:i-expansion}
f(x_i, x_j) = \sum_{m=N}^\infty f^{(m)}(x_j) (x_i-x_j)^m
\end{align}
where
\begin{align}\label{eq:fm-integral}
f^{(m)}(x_j) = \frac{1}{2\pi i} \oint_\gamma \frac{f(x_i, x_j)}{(x_i-x_j)^{m+1}} \, dx_i
\end{align}
Here $\gamma$ is a contour (in the ``$x_i$-plane'') around $x_j$ containing no other singular points of $f$, i.e., none of the $x_k$ for $k \neq j$ nor the origin $0$. For definiteness we might take $\gamma$ to be $x_i = x_j + r e^{it}$ for $0 \leq t \leq 2\pi$ where
\begin{align*}
0 < r < R = \min \left( \{|x_j|\} \cup \{|x_j-x_k| \mid k \neq j\} \right).
\end{align*}
The Laurent series expansion \eqref{eq:i-expansion} for $f(x_i, x_j)$ is convergent in an open disc of radius $R$ centred on $x_j$. As the $x_k \neq x_j$ vary, the coefficients $f^{(m)}$ vary holomorphically, and the radius of convergence varies too.

The coefficient functions $f^{(m)}(x_1, \ldots x_{i-1}, x_{i+1} \ldots, x_n)$ lie in $\Gamma_{n-1}$. Indeed the value of $f^{(m)}$ is well defined for each collection $\{x_k\}$ of variables distinct from $0$ and from each other. The function is single-valued because the contour $\gamma$ can always be chosen to contain $x_j$ and no $x_k$ for $k \neq j$. It is also clear that $f^{(m)}$ is elliptic, because for any $\lambda \in \Lambda$ we have $f(w+\lambda, x_j+\lambda) = f(w, x_j)$. %So I think we get honest elements $f^{(m)} \in \Gamma_{n-1}$ out if this.
In summary
\begin{align}\label{eq:res.ser.exp}
\res_{x_i=x_j} f(x_i, x_j) Y(a^i, x_i-x_j) a^j = \sum_{m \geq N} f^{(m)}(x_j) a^i_{(m)}a^j,
\end{align}
where the functions $f^{(m)}$, defined by \eqref{eq:fm-integral}, are elements of $\Gamma_{n-1}$.

\end{nolabel}

\begin{prop}
For $d$ as defined above, $d((\partial_i + T_i) \widetilde{C}_n) = 0$.
\end{prop}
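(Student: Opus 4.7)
The plan is to show that for any $\xi = f(x_1, \ldots, x_n) \cdot a^1 \otimes \cdots \otimes a^n \otimes b \in \wtil{C}_n$ and any index $i$, the image $d((\partial_i + T_i)\xi)$ lies in $\sum_{k} (\partial_k + T_k) \wtil{C}_{n-1}$ and hence vanishes in $C_{n-1}$. We analyse term by term the sum \eqref{eq:general.diff.def} defining $d$, splitting into cases according to whether the collapsing operation involves slot $i$ or not.

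If the operation does not touch slot $i$ --- namely $p^{(k)}$ with $k \neq i$, or $d^{(k\ell)}$ with $k, \ell \neq i$ --- it commutes with $\partial_i + T_i$: the derivative $\partial_i$ commutes with residues in the other variables, and $T_i$ acts on a tensor slot untouched by the collapse. The output is thus $(\partial_{i'} + T_{i'})$ applied to the operation's result, where $i'$ is the position of $x_i$ in the reduced configuration space after relabeling; this lies in the target submodule.

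If the operation involves slot $i$ as the \emph{source} of the collapse --- namely $p^{(i)}$ or $d^{(ij)}$ with $j > i$ --- the contributions from $\partial_i f$ and from $T_i a^i$ cancel in $\wtil{C}_{n-1}$. Indeed, the identity $Y(Ta^i, x) = \partial_x Y(a^i, x)$ together with the vanishing of the residue of a total derivative gives, schematically,
\begin{align*}
\res_{x_i = 0} (\partial_{x_i} f) \cdot Y(a^i, x_i) b = -\res_{x_i = 0} f \cdot Y(Ta^i, x_i) b,
\end{align*}
and the same integration-by-parts argument applies at the residue $\res_{x_i = x_j}$ underlying $d^{(ij)}$ since $\partial_{x_i}|_{x_j}$ equals $\partial_w$ in the variable $w = x_i - x_j$.

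Finally, when $d^{(ji)}$ with $j < i$ touches slot $i$ as the \emph{target} of the collapse, we use the complementary identity $Y(a^j, w) T a^i = T Y(a^j, w) a^i - \partial_w Y(a^j, w) a^i$ with $w = x_j - x_i$. Passing to variables $(y, w) = (x_i, x_j - x_i)$, so that $\partial_i = \partial_y - \partial_w$ acting on $\tilde{f}(y, w) = f(\ldots, y, \ldots, y+w, \ldots)$, and integrating the $\partial_w$ pieces by parts at $\res_{w=0}$, one obtains after cancellation
\begin{align*}
d^{(ji)}\bigl((\partial_i + T_i)\xi\bigr) = (\partial_{i-1} + T_{i-1}) \cdot d^{(ji)}(\xi)
\end{align*}
in the relabeled variables --- again in the submodule. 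Summing all contributions with the signs from \eqref{eq:general.diff.def} yields the claim. The main bookkeeping burden is tracking the position of $x_i$ after each possible residue operation and variable relabeling, but in every case the algebra reduces to one of the two basic identities above.
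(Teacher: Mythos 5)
Your proof is correct and follows essentially the same route as the paper's: the same case split into operations not touching slot $i$ (which commute with $\partial_i+T_i$ up to reindexing), operations with slot $i$ as source (where $Y(Ta,x)=\partial_x Y(a,x)$ plus integration by parts under the residue gives exact cancellation), and operations with slot $i$ as target (where $[T,Y(a,w)]=\partial_w Y(a,w)$ and the change of variables produce $(\partial_{i-1}+T_{i-1})$ applied to the result). The only cosmetic difference is that you make the substitution $(y,w)=(x_i,x_j-x_i)$ explicit in the target case, whereas the paper phrases the same computation via its identity \eqref{eq:d+T.sub.3}.
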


\begin{proof}
It is clear that $\partial_i + T_i$ commutes with $p^{(j)}$ for $j \neq i$ because the two operations apply to disjoint sets of variables and tensor factors. More precisely $p^{(j)} \circ (\partial_i + T_i) = (\partial_i + T_i) \circ p^{(j)}$ for $i < j$ while $p^{(j)} \circ (\partial_i + T_i) = (\partial_{i-1} + T_{i-1}) \circ p^{(j)}$ for $i > j$, due to the re-indexing of variables that occurs in the definition of $d$. Similar considerations apply to $\partial_i + T_i$ and $d^{(jk)}$ for $i \neq j$, $i \neq k$.

Next we confirm that $p^{(i)} \circ (\partial_i + T_i) = 0$, because of the calculation
\begin{align*}
\res_x f(x) Y(Ta, x)b = \res_x f(x) \partial_x Y(a, x)b = -\res_{x=0} (\partial_x f(x)) Y(a, x)b.
\end{align*}
Essentially the same calculation demonstrates that $d^{(ij)} \circ (\partial_i + T_i) = 0$. Finally we claim that $d^{(ij)} \circ (\partial_j + T_j) = (\partial_{j-1} + T_{j-1}) \circ d^{(ij)}$. For this we need to show that
\begin{align}\label{eq:d+T.sub.3}
\begin{split}
& \res_{x=y} f(x, y) Y(a, x-y) Tb + \res_{x=y} \left( \partial_y f(x, y) \right) Y(a, x-y) b \\
= & {} T \res_{x=y} f(x, y) Y(a, x-y) b + \partial_y \res_{x=y} f(x, y) Y(a, x-y) b.
\end{split}
\end{align}
The difference between the first terms on the left and right hand sides of \eqref{eq:d+T.sub.3} is
\begin{align*}
\res_{t=0} f(y+t, y) Y(Ta, t) b
&= \res_{t=0} f(y+t, y) \partial_t Y(a, t) b \\
&= -\res_{t=0} \partial_t f(y+t, y) Y(a, t) b.
\end{align*}
We can rewrite this as
\begin{align*}
- \partial_y \res_{t=0} f(y+t, y) Y(a, t) b + \res_{t=0} \partial_2 f(y+t, y) Y(a, t) b,
\end{align*}
where $\partial_2$ stands for the partial derivative relative to the second argument. We thus obtain the desired conclusion.
\end{proof}

\begin{prop} The differential $d$ defined by equations \eqref{eq:general.diff.def}-\eqref{eq:general.diff.def.comp2} in $C_\bullet$, satisfies $d \circ d = 0$.
\end{prop}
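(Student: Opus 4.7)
The plan is to mirror the proof for Loday's Leibniz chain complex outlined in Section \ref{sec:Leibniz}, with the Borcherds identity in its contour integral form \eqref{eq:bor.contour} playing the role of the Leibniz--Jacobi identity \eqref{eq:Leiniz.alg.def}. Concretely I would expand $d \circ d : \widetilde{C}_{n+2} \to \widetilde{C}_n$ as a sum over ordered pairs of elementary operations of types $d^{(ij)}$ and $p^{(i)}$, and then group the summands according to which tensor factors they affect.

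Terms in which the two operations act on disjoint factors cancel in pairs. The composition $d^{(pq)} \circ d^{(ij)}$ with $\{i,j\} \cap \{p,q\} = \emptyset$ produces the same element of $\widetilde{C}_n$ as its partner obtained by exchanging the order of the two operations (up to the reindexing that arises from removing a tensor factor), and the prefactors $(-1)^{n+2-i}$ and $(-1)^{n+1-p'}$ combine to opposite signs on the two orderings. This is precisely the sign computation carried out in \S\ref{sec:Leibniz}. The same bookkeeping handles pairs $d^{(ij)} \leftrightarrow p^{(k)}$ with $k \notin \{i,j\}$, reducing in all cases to the commutativity of the two residue operations when applied to $f \in \Gamma_{n+2}$.

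Terms in which the two operations share a tensor factor come in triples of two flavours. First, for each triple of $a$-indices $i<j<k$, the three compositions $d^{(j-1,k-1)} \circ d^{(ij)}$, $d^{(i,k-1)} \circ d^{(jk)}$ and $d^{(j-1,k-1)} \circ d^{(ik)}$ produce, after reading the relevant iterated residues as contour integrals centred at $x_k$, the three terms $Y(Y(a^i, x_i-x_j)a^j, x_j-x_k)a^k$, $Y(a^i, x_i-x_k) Y(a^j, x_j-x_k)a^k$ and $Y(a^j, x_j-x_k) Y(a^i, x_i-x_k)a^k$ of the Borcherds identity applied to $a^i, a^j, a^k$. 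The Loday-style sign analysis arranges these with the signs $(+,-,+)$ needed for \eqref{eq:bor.contour} to imply their sum is zero. Second, for each pair $i<j$ together with the external factor $b$, the three compositions $p^{(j)} \circ p^{(i)}$, $p^{(i)} \circ p^{(j)}$ and $p^{(j-1)} \circ d^{(ij)}$ reproduce the three Borcherds terms $Y(a^j,x_j) Y(a^i,x_i)b$, $Y(a^i,x_i) Y(a^j,x_j)b$ and $Y(Y(a^i, x_i-x_j)a^j, x_j) b$, which again sum to zero by \eqref{eq:bor.contour}.

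The principal obstacle is justifying \eqref{eq:bor.contour} in the present setting, since a general $f \in \Gamma_{n+2}$ is not a Laurent polynomial of the special form $z^m w^k (z-w)^n$ assumed there. The remedy is to expand $f$ locally at the relevant diagonal using \eqref{eq:i-expansion}, invoke the observation (discussed after \eqref{eq:Laurent.series.ring}) that for graded vertex algebras the Borcherds identity extends termwise to series of type \eqref{eq:Laurent.series.ring}, and recombine. That the resulting identity holds in $C_\bullet$ rather than merely in $\widetilde{C}_\bullet$ is ensured by the previous proposition, which establishes that $d$ descends to the quotient.
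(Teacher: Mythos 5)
Your proposal is correct and follows essentially the same route as the paper's proof: disjoint-index pairs cancel by interchanging the order of the two residue (double contour) integrals with the Loday sign bookkeeping, and the shared-target triples (including the case where the target is $b$, which the paper absorbs by setting $x_k=0$) cancel via the contour-integral form of the Borcherds identity, extended to meromorphic coefficient functions exactly as you describe. No gaps to report.
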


\begin{proof}
The basic pattern of the proof is the same as in Section \ref{sec:Leibniz}. First we confirm that the two terms analogous to \eqref{eq:crossing} cancel each other. Because of \eqref{eq:res.ser.exp} and \eqref{eq:fm-integral}, each term is a finite sum over $m, m' \in \Z$ of
\begin{align*}
a^1 \otimes \cdots \otimes a^i_{(m)}a^j \otimes \cdots  \otimes a^k_{(m')}a^\ell \otimes \cdots \otimes a^{n+2},
\end{align*}
with coefficient
\begin{align*}
\frac{1}{(2\pi i)^2} \oint_{\gamma} \oint_{\gamma'} \frac{f(x_i, x_j, x_k, x_\ell)}{(x_i-x_j)^{m+1} (x_k-x_\ell)^{m'+1}} \, dx_i \, dx_k,
\end{align*}
where $x_i = \gamma(t)$ is a small contour around $x_j$ and $x_k = \gamma'(t')$ is a small contour around $x_\ell$. The value of the coefficient is independent of the order of integration. With our choice of signs in \eqref{eq:general.diff.def}, we obtain cancellation just as in Section \ref{sec:Leibniz}.

It remains to treat the terms in which, for some $i < j < k$, both $a^i$ and $a^j$ are applied to $a^k$. These are (again suppressing spectator variables from the notation)
\begin{align*}
&\res_{x_i = x_k} Y(a^i, x_i-x_k) \left[ \res_{x_j = x_k}  f(x_i, x_j, x_k) Y(a^j, x_j-x_k) a^k \right] \\
&\res_{x_j = x_k} Y(a^j, x_j-x_k) \left[ \res_{x_i = x_k}  f(x_i, x_j, x_k) Y(a^i, x_i-x_k) a^k \right] \\
\text{and} \quad &\res_{x_j = x_k} Y( \left[ \res_{x_i = x_j}  f(x_i, x_j, x_k) Y(a^i, x_i-x_j) a^j \right], x_j-x_k).
\end{align*}
No essential change occurs if we remove some of the clutter by setting $x_k=0$. Thus we are considering
\begin{align*}
&\res_{x_i = 0} Y(a^i, x_i) \left[ \res_{x_j = 0}  f(x_i, x_j) Y(a^j, x_j) a^k \right] \\
&\res_{x_j = 0} Y(a^j, x_j) \left[ \res_{x_i = 0}  f(x_i, x_j) Y(a^i, x_i) a^k \right] \\
\text{and} \quad &\res_{x_j = 0} Y( \left[ \res_{x_i = x_j}  f(x_i, x_j) Y(a^i, x_i-x_j) a^j \right], x_j).
\end{align*}
It is now clear that these three terms are expressed by the three corresponding contour integrals in \eqref{eq:bor.contour} for sufficiently small $r > 0$. Once again the signs work out just as in Section \ref{sec:Leibniz}, and we have cancellation.
\end{proof}

\begin{rem}\label{rem:comparison.BD}
We now comment on the relationship between the constructions presented above and those of Beilinson and Drinfeld \cite{BD}. Let $\CA$ be a chiral algebra on an algebraic curve $X$. One obtains from $\CA$ a Lie algebra object $\g_\CA$ in a certain tensor category of $\CD$-modules on the Ran space $\Ran(X)$. This Lie algebra has a Chevalley-Eilenberg complex $\text{CE}(\g_\CA)$, which is in particular a complex of $\CD$-modules on $\Ran(X)$, and the chiral homology $\Hch_\bullet(X, \CA)$ is defined to be the global sections over $\Ran(X)$ of the de Rham homology of $\text{CE}(\g_\CA)$. The notion of a complex of sheaves on $\Ran(X)$ is formalised as a diagram of complexes of sheaves over the spaces $X^I$, as $I$ runs over the set of nonempty finite sets, with arrows induced by surjections $I \rightarrow I'$. A global section over $\Ran(X)$ of such an object is then defined to be a compatible collection of global sections over the individual $X^I$.

The complex $C_\bullet$ which we have defined above is the Beilinson-Drinfeld construction, done explicitly in the case in which $X = E$ is a smooth elliptic curve, but taking no account of higher derived functors of de Rham homology. For this reason, our complex $C_\bullet$ does not necessarily compute chiral homology in their sense in all degrees. However, as we have proved in {\cite{EH21}}, it does compute Beilinson-Drinfeld chiral homology in degrees $0$ and $1$.

In fact the passage to global sections described above does not quite correspond to $C_\bullet$ but rather to the quotient complex $Q_\bullet$ defined by $Q_n = (C_n)_{\Sigma_n}$, where the action of the symmetric group $\Sigma_n$ is as described in \S\nolinebreak \ref{nolabel:symm.action} below. We now verify directly that the differentials \eqref{eq:general.diff.def} are compatible with passage to coinvariants. This will use the skew-symmetry identity \eqref{eq:skew-symm}, which, we note, has not explicitly appeared in the definition of $C_\bullet$ nor in the verification of $d \circ d = 0$.% Of course the explicit check, though instructive, is strictly speaking superfluous since the chiral complex is nothing other than Beilinson and Drinfeld's construction in the underived setting.
\end{rem}

\begin{nolabel}\label{nolabel:symm.action}
Now we introduce actions of the symmetric groups in the terms $C_n$ of our complex. To reduce clutter, here and below, we shall often shorten
\begin{align*}
f(x_1, \ldots, x_n) a^1 \otimes a^2 \otimes \ldots \otimes a^n \otimes b    
\end{align*}
to
\begin{align*}
f(x_1, \ldots, x_n) \cdot a^1 a^2 \ldots a^n \mid b.
\end{align*}
For $\sigma \in \Sigma_n$ we set
\begin{align}\label{eq:symm.grp.action}
\sigma\left( f(x_1, \ldots, x_n) \cdot a^1 \cdots a^n \mid b \right) = (-1)^{|\sigma|} f(x_{\sigma(1)}, \ldots, x_{\sigma(n)}) \cdot a^{\sigma(1)} \cdots a^{\sigma(n)} \mid b.
\end{align}
Let us denote by $I_n \subset C_n$ the subspace spanned by elements $A - \sigma(A)$ as $A$ runs over $C_n$ and $\sigma$ over $\Sigma_n$. Thus the spaces of coinvariants are given by $Q_n = (C_n)_{\Sigma_n} = C_n / I_n$. In the following proposition we verify that the coinvariants form a complex with the induced differential.
\end{nolabel}

\begin{prop}
The spaces of coinvariants $Q_n = (C_n)_{\Sigma_n}$, with differential induced from $C_\bullet$, form a complex.
\end{prop}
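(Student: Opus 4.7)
The goal is to show that the differential $d \colon C_n \to C_{n-1}$ sends $I_n$ into $I_{n-1}$, so that the induced map on $Q_\bullet = C_\bullet / I_\bullet$ is well-defined; the identity $d \circ d = 0$ in $Q_\bullet$ then follows immediately from the corresponding identity already established in $C_\bullet$. Since $\Sigma_n$ is generated by the adjacent transpositions $s_k = (k, k+1)$ and $I_n$ is spanned by the elements $A - \sigma A$, it suffices to verify $d(A - s_k A) \in I_{n-1}$ for each $A \in C_n$ and each $1 \le k \le n-1$. Writing $A_{\mathrm{sw}}$ for the unsigned swap of positions $k$ and $k+1$ in $A$, the sign convention in \eqref{eq:symm.grp.action} gives $s_k A = -A_{\mathrm{sw}}$, so the task becomes $d(A) + d(A_{\mathrm{sw}}) \in I_{n-1}$.

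I would split $d = \sum_{i<j} (-1)^{n-i} d^{(ij)} + \sum_i (-1)^{n-i} p^{(i)}$ and pair each summand of $d(A)$ with a summand of $d(A_{\mathrm{sw}})$ performing the same underlying vertex algebra operation on the same tensor factors. For instance, $d^{(k, j)}(A)$, which removes $a^k$ and modifies $a^j$, pairs with $d^{(k+1, j)}(A_{\mathrm{sw}})$, since $A_{\mathrm{sw}}$ has $a^k$ at position $k+1$; similarly $d^{(i, k)}(A)$ pairs with $d^{(i, k+1)}(A_{\mathrm{sw}})$, and the $p^{(i)}$ terms pair analogously. A direct inspection shows that in each pair the two outputs are either equal as elements of $C_{n-1}$, in which case the signs $(-1)^{n-i}$ and $(-1)^{n-i-1}$, being opposite, produce exact cancellation, or are related by the unsigned swap of two adjacent output positions, in which case \eqref{eq:symm.grp.action} identifies this swap with $-s'$ acting on the output, so with equal signs the pair contributes an element of the form $B - s' B \in I_{n-1}$.

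The remaining summand is $d^{(k, k+1)}$, in which both active indices lie in $\{k, k+1\}$. Setting $v = x_{k+1}$ and $w = x_k - x_{k+1}$, the surviving slot contains
\begin{align*}
d^{(k, k+1)}(A) & = \res_w f(\ldots, v+w, v, \ldots) \, Y(a^k, w) a^{k+1}, \\
d^{(k, k+1)}(A_{\mathrm{sw}}) & = \res_w f(\ldots, v, v+w, \ldots) \, Y(a^{k+1}, w) a^k,
\end{align*}
with the spectator variables and tensor factors unaffected. Applying skew-symmetry \eqref{eq:skew-symm}, $Y(a^{k+1}, w) a^k = e^{wT} Y(a^k, -w) a^{k+1}$, together with the change of variables $w \mapsto -w$ in the second residue, converts $d^{(k, k+1)}(A_{\mathrm{sw}})$ into $-\res_w f(\ldots, v, v-w, \ldots) e^{-wT} Y(a^k, w) a^{k+1}$. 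The defining relation $(\partial_v + T) \equiv 0$ in $C_{n-1}$ allows the replacement of $e^{-wT}$ by $e^{w \partial_v}$ acting on the coefficient function, effecting the shift $v \mapsto v+w$ and converting $f(\ldots, v, v-w, \ldots)$ into $f(\ldots, v+w, v, \ldots)$. Consequently $d^{(k, k+1)}(A) + d^{(k, k+1)}(A_{\mathrm{sw}}) \equiv 0$ already in $C_{n-1}$, hence certainly modulo $I_{n-1}$.

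The main obstacle is the combinatorial bookkeeping of signs and re-indexing in the pairings of the first two groups of terms; the last calculation is short but conceptually central, being the unique place where vertex algebra skew-symmetry enters the argument.
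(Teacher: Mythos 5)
Your proposal is correct and follows essentially the same route as the paper: reduce to adjacent transpositions, pair off the $d^{(ij)}$ and $p^{(i)}$ terms so that each pair either cancels outright or contributes an element of $I_{n-1}$, and handle the diagonal term $d^{(k,k+1)}$ via skew-symmetry \eqref{eq:skew-symm} combined with the relation $T \equiv -\partial$ in the quotient. The only cosmetic difference is your use of the unsigned swap $A_{\mathrm{sw}}$ in place of the signed $\tau(A)$; the substance of the argument, including the change of variables $w \mapsto -w$ and the replacement of $e^{-wT}$ by $e^{w\partial_v}$, matches the paper's computation.
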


In the sequel we shall refer to $C_\bullet$ and $Q_\bullet$ as the (ordered and unordered, respectively) chiral complexes of $V$.

\begin{proof}
We wish to show that $d(I_n) \subset I_{n-1}$, and to do so it suffices to check $d(A - \tau(A)) \in I_{n-1}$ for a transposition $\tau = (\alpha, \alpha+1)$, where $1 \leq \alpha \leq n-1$, and where
\begin{align*}
A = f(x_1, \ldots, x_\alpha, \ldots x_n) a^1 \cdots a^\alpha \cdots a^n \mid b.
\end{align*}
From \eqref{eq:general.diff.def.comp1} we have
\begin{align*}
d^{(i \alpha)}(A) = \res_{x_i=x_\alpha} f(x_i, x_\alpha, x_{\alpha+1}) (Y(a^i, x_i-x_\alpha)a^\alpha) a^{\alpha+1} \mid b
\end{align*}
(suppressing spectator variables from the notation) and similarly
\begin{align*}
d^{(i \, \alpha+1)}(\tau(A)) = \res_{x_i=x_{\alpha+1}} f(x_i, x_{\alpha+1}, x_{\alpha})  a^{\alpha+1} (Y(a^i, x_i-x_{\alpha+1})a^\alpha) \mid b.
\end{align*}
Up to a sign, these two terms differ by simultaneous exchange of $a^\alpha$ with $a^{\alpha+1}$ and $x_\alpha$ with $x_{\alpha+1}$. That is, up to a sign, these two terms are related by the action of a transposition in $\Sigma_{n-1}$. Taking account of the signs imposed in \eqref{eq:general.diff.def}, we verify that the sum of these contributions to $d(A - \tau(A))$ lies in $I_{n-1}$. Similar arguments apply to the pairs of terms
\begin{align*}
d^{(i \, \alpha+1)}(A) \quad \text{and} \quad d^{(i \, \alpha)}(\tau(A)) \quad &\text{for $i < \alpha$}, \\
d^{(\alpha \, j)}(A) \quad \text{and} \quad d^{(\alpha+1 \, j)}(\tau(A)) \quad &\text{for $j > \alpha+1$}, \\
d^{(\alpha+1 \, j)}(A) \quad \text{and} \quad d^{(\alpha \, j)}(\tau(A)) \quad &\text{for $j > \alpha+1$}, \\
p^{(\alpha)}(A) \quad \text{and} \quad p^{(\alpha+1)}(\tau(A)) \quad & \\
p^{(\alpha+1)}(A) \quad \text{and} \quad p^{(\alpha)}(\tau(A)) \quad &
\end{align*}
In case the sets $\{i, j\}$ and $\{\alpha, \alpha+1\}$ are disjoint, it is clear that $d^{(ij)}(A-\tau(A))$ and $p^{(i)}(A-\tau(A))$ lie in $I_{n-1}$.

It remains to verify that $d^{(\alpha \, \alpha+1)}(A - \tau(A)) \in I_{n-1}$, and here we use the skew-symmetry identity \eqref{eq:skew-symm} in the vertex algebra $V$. To improve readability we write $x := x_\alpha$, $y := x_{\alpha+1}$, and $a := a^{\alpha}$ and $a' := a^{\alpha+1}$, so that it is required to show
\begin{align*}
-\res_{x=y} f(x, y) Y(a, x-y)a' \equiv \res_{x=y} f(y, x) Y(a', x-y) a \pmod{(T+\partial_y)}.
\end{align*}
Setting $t = x-y$, the left hand side becomes $-\res_{t=0} f(y+t, y) Y(a, t)a'$, and the right hand side becomes
\begin{align*}
& \res_{t=0} f(y, y+t) Y(a', t)a
= \res_{t=0} f(y, y+t) e^{tT} Y(a, -t)a' \\
\equiv& \res_{t=0} f(y, y+t) e^{-t \partial_{y}} Y(a, -t)a'
= \res_{t=0} f(y-t, y) Y(a, -t)a' \\
=& -\res_{t=0} f(y+t, y) Y(a, t)a'.
\end{align*}
So we obtain the desired equivalence.
\end{proof}

\section{Filtration on the chiral complex: Motivation}

\begin{nolabel}
To motivate the general constructions to be introduced below, we examine homology of $C_\bullet$ in degree $0$. We have $C_0 = V$, and $C_1$ a certain quotient of $\Gamma_1 \otimes V \otimes V$. Indeed $\Gamma_1$ is linearly spanned by the functions $1$, $\wp(x_1)$ and the derivatives $\partial_1^k \wp(x_1)$ for $k \geq 1$, and the quotient is by the action of $\partial_{1} + T_1$. For $k \geq 1$ we see that $\partial_1^k \wp(x_1) \cdot a^1 \mid b$ is identified with $-\partial_1^{k-1} \wp(x_1) \cdot (Ta^1) \mid b$, so that in fact $C_1$ is spanned by two kinds of terms: $\wp(x_1) \cdot a^1 \mid b$ and $1 \cdot a^1 \mid b$. We recover the familiar description of coinvariants
\begin{align*}
H_0(C) = V / \text{Im}(d), \quad \text{Im}(d) = \left< a_{(\wp)}b, a_{(0)}b \mid a, b \in V \right>. 
\end{align*}
Here $a_{(\wp)}b$ is shorthand for
\begin{align*}
\res_x \wp(x) Y(a, x) b = a_{(-2)}b + \sum_{k = 1}^\infty (2k+1) G_{2k+2}(\tau) a_{(2k)}b.
\end{align*}
\end{nolabel}

\begin{nolabel}
Let us now take a good increasing filtration $G$ on $V$, for example a standard filtration. We may equip $\Gamma_1 \otimes V \otimes V$ with the induced filtration on $V \otimes V$, extending scalars from $\C$ to $\Gamma_1$. Since $T$ has degree $0$ relative to $G$, this yields a filtration on the quotient $C_1$. The differential $d : C_1 \rightarrow C_0$ is compatible with the filtration. Let us denote $A = \grg{}(V)$. Upon passage to the associated graded, the images of $\wp(x_1) \cdot a^1 \mid b$ and $1 \cdot a^1 \mid b$ in $A$ are, respectively, $a_{(-2)}b$ and $0$. Thus by standard theory of filtrations $\dim R_V < \infty$ implies $\dim H_0(C) < \infty$.
\end{nolabel}

\begin{nolabel}
The implication just stated was proved by Zhu {\cite[Lemma 4.4.1]{zhu96}} with an argument by induction on conformal weight (the theory of filtrations on vertex algebras had not yet been developed at the time). The same proof implies finite dimensionality of $\dim H_0(C)$ subject to the weaker hypothesis $\dim \HP_0(R_V) < \infty$, since $\HP_0(R_V) = R_V / \{R_V, R_V\} = V / \left<V_{(-2)}V, V_{(0)}V \right>$ {\cite[Proposition 5.2]{Arakawa.Kawasetsu}}.
\end{nolabel}

\begin{nolabel}
Motivated by these observations, and desiring to establish a link with Poisson homology of $R_V$ at higher degrees, we seek a filtration on $C_\bullet$ in whose associated graded both terms $a_{(-2)}b$ and $a_{(0)}b$ appear in the image of $d : \gr(C_1) \rightarrow \gr(C_0)$. This is achieved by working with nontrivial filtrations on the spaces $\Gamma_n$. Ultimately the right filtration for our needs comes from Hodge and weight filtrations on the de Rham cohomology of configuration spaces of the elliptic curve $E$.
\end{nolabel}

\begin{nolabel}\label{nolabel:explicit.C.Hodge}
We now describe a filtration $G$ on $\Gamma_n$ which will form part of the definition of the Hodge filtration on $C_\bullet$ to be given in Section \ref{sec:first.filtration} below. We set $\filg{-n+p}\Gamma_n \subset \Gamma_n$ to be the vector subspace spanned by functions which are, locally, the product of at most $p$ functions with a pole on a single one of the divisors $x_i=0$ and $x_i=x_j$ in $X^n$. This is an increasing filtration on $\Gamma_n$ with $\filg{-n-1}\Gamma_n = 0$ and $\filg{-n}\Gamma_n = \C \cdot 1$. Clearly this filtration is stable under the action of the derivations $\partial_i$ on $\Gamma_n$.
\end{nolabel}

\begin{nolabel}\label{nolabel:deg0.HP}
For example $G_{-1}\Gamma_1$ is spanned by the function $1$ and $G_0\Gamma_1 = \Gamma_1$. This filtration, together with a choice of good increasing filtration on $V$, induces a filtration on $C_1$, which we also refer to as $G$. If $a^1 \in \filg{p}V$ and $b \in \filg{q}V$ then $\wp(x_1) \cdot a^1 \mid b \in \grg{p+q}C_1$ and $d(\wp(x_1) \cdot a^1 \mid b) = a^1_{(\wp)}b \equiv a^1_{(-2)}b$ in the associated graded space $\grg{p+q}V$. If $a^1 \in \filg{p+1}V$ then $1 \cdot a^1 \mid b \in \grg{p+q}C_1$ also, and $d(1 \cdot a^1 \mid b) = a^1_{(0)}b$ in $\grg{p+q}V$. Thus we recover $H_0(C_\bullet)$ via a spectral sequence with $\HP_0(R_V)$ on the first page, and in particular the dimension of the former is bounded above by the dimension of the latter.
\end{nolabel}

\begin{rem}
We will justify the name ``Hodge filtration'' in Section \ref{sec:Hodge.geometry} below, where Hodge and weight filtrations in cohomology are discussed, but the reader may safely skip directly to Section \ref{sec:first.filtration} if they prefer. In particular, we will recall the role played by complexes of logarithmic differential forms in the construction of the usual Hodge and weight filtrations on the cohomology of algebraic varieties. In fact a variant on the chiral chain complex incorporating differential forms with logarithmic poles on diagonal divisors has been briefly discussed in {\cite[Section 4.2.13]{BD}}.
\end{rem}

\begin{rem}
In the series of papers \cite{BDHK19, BDK20, BDKV21, BDK21, BDHKV21} a local theory of the chiral operad of \cite{BD} has been developed and studied. Filtrations similar to the one described in \S\nopagebreak \ref{nolabel:explicit.C.Hodge} above are considered, and structures closely related to the variational complex of \cite{DSK.variational} are recovered upon passage to associated graded complexes. It would be interesting to investigate connections between these works and the approach developed in the present article.
\end{rem}

\section{Cohomology of configuration spaces of elliptic curves}\label{sec:Hodge.geometry}

\begin{nolabel}
Let $E$ be our elliptic curve $\C / \Lambda$ and $\ovX = E \backslash \{0\}$, as above. In this section we describe the coordinate rings $\Gamma_n$ and the cohomology of the affine varieties $F(\ovX, n)$ in more detail.
\end{nolabel}

\begin{nolabel}
The ring $\Gamma_n$ contains the pullbacks of the Weierstrass $\wp$-function $\wp(x_i)$ for $1 \leq i \leq n$, and products thereof. Now recall the Weierstrass $\zeta$-function, characterised by its Laurent expansion
\begin{align*}
\zeta(x) = x^{-1} - \sum_{k = 0}^\infty G_{2k+2}(\tau) x^{2k+1},
\end{align*}
which is meromorphic on $\C$ with simple poles on the points of $\Lambda$, and satisfies
\begin{align}\label{eq:zeta.quasi}
\zeta(x+1) = \zeta(x), \qquad \zeta(x+\tau) = \zeta(x) - 2\pi i.
\end{align}
We have $\wp(x) = -\zeta'(x) - G_2(\tau)$. While $\zeta(x)$ is not elliptic, it is immediate to verify that
\begin{align*}
\zeta(x, y) = \zeta(x-y) - \zeta(x) + \zeta(y)
\end{align*}
is a meromorphic function on $E^2$ with poles at $x=0$, $y=0$ and $x=y$. Thus, for $n \geq 2$, the ring $\Gamma_n$ contains the functions $\zeta(x_i, x_j)$ for $1 \leq i < j \leq n$, as well as their products with each other and with the $\wp(x_i)$, and derivatives thereof. The functions $\wp(x_i)$ and $\zeta(x_i, x_j)$ do not generate $\Gamma_n$ as an algebra in general (indeed not even for $n=1$), however we have the following result.
\end{nolabel}

\begin{prop}[{\cite[Lemma 9.3]{EH21}}]\label{prop:Gamma.gens}
Denote by $H_n \subset \Gamma_n$ the subalgebra generated by $\wp(x_i)$ for $1 \leq i \leq n$ and $\zeta(x_i, x_j)$ for $1 \leq i < j \leq n$. Then the quotient vector space $\Gamma_n / \left<\partial_i \Gamma_n\right>_{i=1,\ldots, n}$ is linearly spanned by the image of $H_n$.
\end{prop}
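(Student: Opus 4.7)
The plan is to proceed by a double induction: outer on the number of variables $n$, and inner on the total order of poles of $f \in \Gamma_n$ along the coordinate divisors $x_i = 0$ and diagonal divisors $x_i = x_j$.

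First I would establish the base case $n = 1$: since $\Gamma_1 = \C[\wp(x_1), \wp'(x_1)]$ subject to the Weierstrass relation $(\wp')^2 = 4\wp^3 - 60G_4\wp - 140G_6$, any element may be written as $P(\wp) + Q(\wp)\wp'$, and the second summand equals $\partial_1 R(\wp)$ with $R' = Q$. Hence $\Gamma_1/\partial_1\Gamma_1$ is spanned by $\C[\wp(x_1)] = H_1$.

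For the inductive step, the central identity is
\[
\partial_i\zeta(x_i, x_j) = \wp(x_i) - \wp(x_i - x_j),
\]
which confirms $\wp(x_i - x_j) \in \Gamma_n$ and gives $\wp(x_i - x_j) \equiv \wp(x_i) \pmod{\partial_i\Gamma_n}$, while $\partial_i^m \wp(x_i - x_j) \equiv 0 \pmod{\partial_i\Gamma_n}$ for all $m \geq 1$. Given $f \in \Gamma_n$ with a pole of order $k \geq 2$ on the diagonal $x_i = x_j$ and leading Laurent coefficient $c_{-k} \in \Gamma_{n-1}$ (viewed as $x_i$-independent), I subtract an appropriate scalar multiple of $c_{-k} \cdot \partial_i^{k-2}\wp(x_i - x_j)$ from $f$ to match the leading pole. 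For $k \geq 3$ the subtracted function equals $\partial_i(c_{-k} \cdot \partial_i^{k-3}\wp(x_i - x_j))$ and so lies in $\partial_i\Gamma_n$, strictly lowering the pole order without altering the class of $f$. For $k = 2$ it reduces to $c_{-2}\wp(x_i)$ modulo $\partial_i\Gamma_n$; the outer induction applied to $c_{-2} \in \Gamma_{n-1}$ then places this in $H_n + \sum_j\partial_j\Gamma_n$. Poles of order $\geq 2$ at coordinate divisors $x_i = 0$ are handled symmetrically using $\partial_i^{k-2}\wp(x_i)$.

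The hard part is handling simple ($k=1$) poles, since subtracting $c_{-1}\zeta(x_i, x_j)$ kills the pole at $x_i = x_j$ but introduces new simple poles at $x_i = 0$ and $x_j = 0$, so ordinary induction on pole order fails. The key observation is the residue theorem applied in the single variable $x_i$: for fixed values of the remaining variables, $f$ is elliptic in $x_i$, and $\res_{x_i = 0} f + \sum_{j \neq i}\res_{x_i = x_j} f = 0$. Consequently the simultaneous subtraction $\sum_{j \neq i} r_j\zeta(x_i, x_j)$, where $r_j = \res_{x_i = x_j} f \in \Gamma_{n-1}$, kills all simple poles in $x_i$ at once (the residue at $x_i = 0$ being automatically cancelled by the constraint). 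The outer induction applied to each $r_j$ writes $r_j \equiv h_j \pmod{\partial\Gamma_{n-1}}$ with $h_j \in H_{n-1}$, and modulo derivatives the subtracted term becomes $\sum_j h_j\zeta(x_i, x_j) \in H_n$. After finitely many such reductions, performed in an order designed to prevent re-introduction of simple poles into already-treated variables, the remaining function has no diagonal poles and hence lies in $\Gamma_1^{\otimes n}$; applying the base case factor-by-factor expresses it as a polynomial in $\wp(x_1), \ldots, \wp(x_n)$, manifestly in $H_n$.
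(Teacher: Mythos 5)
The paper itself gives no proof of this proposition --- it is imported verbatim from \cite[Lemma 9.3]{EH21} --- so your argument has to be judged on its own terms. Its ingredients are the right ones, and you have correctly isolated the one genuinely non-obvious point: simple poles cannot be removed one divisor at a time, but the residue theorem for elliptic functions in the single variable $x_i$ guarantees $\res_{x_i=0}f=-\sum_{j\neq i}\res_{x_i=x_j}f$, so that subtracting $\sum_{j\neq i}r_j\,\zeta(x_i,x_j)$ removes all residues in $x_i$ simultaneously. The base case via the Weierstrass relation, the identity $\partial_i\zeta(x_i,x_j)=\wp(x_i)-\wp(x_i-x_j)$, and the removal of poles of order $k\geq 3$ by exact derivatives $\partial_i\bigl(c_{-k}\,\partial_i^{k-3}\wp(x_i-x_j)\bigr)$ are all correct.

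The gap is in the termination of your inner induction. The quantity ``total order of poles along all divisors'' need not decrease under your reduction step, because the leading Laurent coefficient $c_{-k}$ along $x_i=x_j$ is a function on that divisor and can carry poles along divisors not involving $x_i$ that $f$ itself does not have. Concretely, $f=\zeta(x_1,x_3)\,\wp'(x_1-x_2)\in\Gamma_3$ has total pole order $6$ and no pole along $x_2=x_3$ or $x_2=0$; subtracting $\partial_1\bigl(\zeta(x_2,x_3)\,\wp(x_1-x_2)\bigr)$ leaves $\bigl(\zeta(x_1,x_3)-\zeta(x_2,x_3)\bigr)\wp'(x_1-x_2)$, whose total pole order is again $6$, now distributed over the divisors $x_1=x_2$, $x_1=x_3$, $x_2=x_3$, $x_1=0$, $x_2=0$. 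So the induction as stated does not terminate. The repair is to work strictly one variable at a time: fix $x_n$, induct on the pole order of $f$ \emph{in the variable $x_n$ alone} (which your subtractions do strictly decrease), and stop once $f$ is a $\Gamma_{n-1}$-linear combination of $1$, $\wp(x_n)$ and the $\zeta(x_n,x_j)$; only then invoke the outer induction on the coefficients. Even at that stage, rewriting a coefficient $r_j$ of $\zeta(x_n,x_j)$ as $h_j+\sum_l\partial_l u_l$ produces the cross term $u_j\,\partial_j\zeta(x_n,x_j)=u_j\bigl(\wp(x_n-x_j)-\wp(x_j)\bigr)$, which must be fed back through the already-treated cases (it is congruent to $u_j\wp(x_n)-u_j\wp(x_j)$ modulo $\partial_n\Gamma_n$, and these generate no further cross terms). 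None of this changes the substance of your argument, but the phrase ``an order designed to prevent re-introduction of simple poles'' is precisely where the induction has to be restated to be valid.
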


\begin{nolabel}
Returning to the filtration defined in \S\nolinebreak \ref{nolabel:explicit.C.Hodge}, we have $\wp(x_i), \zeta(x_i, x_j) \in \filg{-n+1} \Gamma_n$, and the product of $p$ of these generators lies in $\filg{-n+p}\Gamma_n$. Recall that $C_n$ is a quotient of $\Gamma_n \otimes V^{\otimes n+1}$, in which a derivative $\partial_i$ acting on $f \cdot a^1 a^2 \cdots a^n \mid b$ may be traded for a copy of $T$ acting on $a^i$. Proposition \ref{prop:Gamma.gens} then implies that any element of $C_n$ has a representative in the subspace $H_n \otimes V^{\otimes n+1}$.
\end{nolabel}

\begin{rem}
In terms of the algebraic coordinates $u$ and $v$ (see \S\nolinebreak \ref{nolabel:algebraic.coords.E}) the function $\zeta(x_i, x_j)$ is expressed as $(v_i+v_j)/(u_i-u_j)$, due to the following classical elliptic function identity \cite[(18.4.3)]{Ab.Steg}:
\begin{align}\label{eq:zeta.wp.identity}
\zeta(x-y) - \zeta(x) + \zeta(y) = \frac{1}{2} \frac{\wp'(x)-\wp'(y)}{\wp(x)-\wp(y)}.
\end{align}    
\end{rem}

\begin{nolabel}\label{nolabel:dR.sing}
Let $Y$ be a smooth complex algebraic variety of dimension $n$. For a right $\CD_Y$-module $M$ a sheaf of vector spaces $h(M)$ on $Y$ is defined by
\begin{align*}
h(M) = M / (M \cdot \Theta_Y),
\end{align*}
where $\Theta_Y \subset \CD_Y$ is the tangent bundle. As a functor between derived categories, the de Rham homology of right $\CD$-modules is the composition of $h$ with the global sections functor $\Gamma$. The de Rham homology of the canonical bundle $\omega_Y = \wedge^n \Omega_Y$ (which is naturally a right $\CD_Y$-module) recovers the singular cohomology of $Y$ up to a shift. In particular if we take $Y$ affine then $\Gamma$ is exact, and we recover
\begin{align*}
\Gamma(Y, h(M)) \cong H^{n}(Y).
\end{align*}
Restricting attention to $Y = F(\ovX, n)$, for which $\omega_Y \cong \OO_Y$, we arrive at the identification
\begin{align}\label{eq:dR.sing}
\Gamma_n / \left<\partial_i \Gamma_n\right>_{i=1,\ldots, n} \cong H^{n}(F(\ovX, n)).
\end{align}
\end{nolabel}

\begin{nolabel}
Since $F(\ovX, n)$ is an affine algebraic variety, its cohomology can be computed as the cohomology of the algebraic de Rham complex $\Omega_{F(\ovX, n)}^\bullet$, which is the exterior algebra of the module of K\"{a}hler differentials of $\Gamma_n$.
\end{nolabel}

% Hironaka's famous theorem is on resolution of singularities.
% Hironaka's other famous theorem: any smooth algebraic variety is isomorphic /birational to the complement of a normal crossings divisor in a smooth projective variety.

\begin{nolabel}
We now discuss the Hodge and weight filtrations on the cohomology $H^*(Y)$ of a smooth complex algebraic variety $Y$ of dimension $n$. If $Y$ is compact then the Hodge filtration $H^*(Y) = \filf{0} \supset \filf{1} \supset \ldots$ can be described as follows. Cohomology classes are represented by Dolbeault forms of type $(k, \ell)$, which is to say differential forms locally of the form $f \, dx_{i_1} \cdots dx_{i_k} d\ov{x}_{j_1} \cdots d\ov{x}_{j_\ell}$ where $f$ is a $C^\infty$ function. The Hodge filtration (which for $Y$ compact, though not in general, is actually induced by a $\Z$-grading) is obtained by setting $F^p$ to be that part of the cohomology spanned by the classes of Dolbeault forms of type $(k, \ell)$ for $k \geq p$.
\end{nolabel}

\begin{nolabel}
The extension of the definition of Hodge filtration to noncompact $Y$ has been given by Deligne \cite{Deligne}: First embed $Y$ into a smooth projective variety $\ovy$ as the complement of a normal crossings divisor $D$. That is $Y = \ovy \backslash D$. Suppose that $D \subset \ovy$ is defined locally as the vanishing locus of a function $f$. One considers the complex of sheaves of differential forms on $\ovy$ with logarithmic singularities on $D$, denoted $\Omega_{\ovy}^\bullet(\log D)$. By definition a differential form $\omega$ on ${\ovy}$ has logarithmic singularities on $D$ if locally $f \omega$ and $f d\omega$ are holomorphic \cite{Saito}. The Hodge filtration is obtained applying the Hodge-de Rham spectral sequence to $\Omega_{\ovy}^\bullet(\log D)$. We explain the details briefly. In general, for a cohomological complex $A^\bullet$, the truncation $\sigma_{\geq p} A^\bullet$ of $A^\bullet$ is the complex defined by
\begin{align*}
(\sigma_{\geq p} A)^i = \begin{dcases*}
A^i & if $i \geq p$, \\
0 & if $i < p$. \\
\end{dcases*}
\end{align*}
The Hodge-de Rham spectral sequence is a spectral sequence whose first page is
\begin{align*}
E_1^{p, q} = H^q({\ovy}, \Omega_{\ovy}^p(\log D)),
\end{align*}
and which converges to the cohomology $H^*(Y)$ of $Y$. The decreasing filtration consisting of truncated subcomplexes $\sigma_{\geq p} \Omega_{\ovy}^\bullet(\log D)$ induces a decreasing filtration on $E_1^{\bullet, \bullet}$, and the Hodge filtration on $H^*(Y)$ is then defined to be the induced filtration \cite[{Tag 0FM7}]{stacks-project}
\begin{align*}
\filf{p}H^n(Y) = \text{Im}(H^n({\ovy}, \sigma_{\geq p} \Omega_{\ovy}^\bullet(\log D)) \rightarrow H^n(Y)).
\end{align*}
% It is crucial that what was used here is the complex of logarithmic forms. The algebraic de Rham complex of $Y$ can be identified with the sheaf $\Om_X^\bullet(*D)$ -- forms with arbitrary poles on $D$ -- which is superficially very similar to $\Om_X^\bullet(\log D)$. But if you do the construction above on the algebraic de Rham complex of Y (for which global sections is exact), rather than the complex of logarithmic forms, you'll get the boring (and wrong) answer $F^p H^* = H^{\geq p}$.
\end{nolabel}

\begin{nolabel}
Let us illustrate the definition by working out the simple example of $Y = \ovX = {E} \backslash \{0\}$ embedded in ${\ovy} = E$. From the algebraic de Rham complex of $\ovX$ we obtain $\dim H^0(\ovX) = 1$ with basis given by the function $1$ and $\dim H^1(\ovX) = 2$ with basis given by the $1$-forms $dx$ and $\wp(x)\, dx$. All higher cohomology groups vanish. On the other hand we have $\Omega^0_{\ovy}(\log{0}) = \Omega^0_{\ovy} = \OO_{\ovy}$, so that $E_1^{0, 0} = E_1^{0, 1} = \C$, and also $\Gamma(\Omega^1_{\ovy}(\log{0})) = \Gamma(\Omega^1_{\ovy}) = \C \, dx$ since there are no meromorphic forms with pole of order $1$. Thus the Hodge filtration on $H^0(\ovX)$ is given by $\filf{0} = H^0$, and the Hodge filtration on $H^1(\ovX)$ is given by $\filf{0} = H^1$ and $\filf{1} = \C \, [dx]$. We can also compare this with the $C^\infty$ picture. Due to \eqref{eq:zeta.quasi} the $C^\infty$ function (actually real analytic function)
\begin{align*}
f(x, \ov{x}) = \zeta(x) + 2\pi i \frac{\ov{x}-x}{\ov{\tau}-\tau}
\end{align*}
is $\Lambda$-periodic on $\C$, hence descends to a $C^\infty$ function on $\ovX$. In the $C^\infty$ de Rham complex of $\ovX$ we therefore have
\begin{align}\label{eq:wp.is.dxbar}
d\ov x = \frac{\ov\tau - \tau}{2\pi i} \left( \wp(x) + G_2(\tau) \right) dx + dx.
\end{align}
We have, once again, that $[dx]$ spans $\filf{1}$, and in the quotient $\filf{0} / \filf{1}$ we have the relation $[\wp(x) \, dx] = \frac{2\pi i}{\ov\tau - \tau} [d\ov{x}]$.
\end{nolabel}

\begin{nolabel}
The cohomology of a smooth, not necessarily compact, complex algebraic variety famously comes equipped with a mixed Hodge structure \cite{Deligne}. Such a structure consists of two filtrations: the decreasing Hodge filtration described above, and an increasing filtration called the weight filtration. The weight filtration $\filw{0} \subset \ldots \subset \filw{n} = H^*(Y)$ for $Y = {\ovy} \backslash D$ as above, is induced by the increasing filtration of $\Omega_{\ovy}^\bullet(\log{D})$ by subcomplexes $\filw{\ell}$, where $\filw{\ell}$ is defined to be the span of regular functions times forms locally expressible as $\tfrac{1}{X_I} dx_I \wedge dx_J$ for $|I| \leq \ell$. {\cite[p. 208]{Voisin-vol1}}. % Up to a shift of index, this is the filtration induced by the Leray spectral sequence for the hypercohomology $H^*(j_* I^\bullet) \Rightarrow H^*(Y)$ (where $I^\bullet$ is a resolution of $\underline{\C}$ and $j : Y \rightarrow X$ is the inclusion).
\end{nolabel}

\begin{nolabel}
In what follows we shall work with the obvious embedding $F(X, n) \subset E^n$, whose complement $D$ is the union of divisors $\{x_i = 0\}$ and $\{x_i = x_j\}$, for $1 \leq i, j \leq n$ and $i \neq j$. Despite the fact that $D$ is not normal crossings, we shall work with the filtrations $F$ and $W$ described above, and relate them to the filtration $G$ on $\Gamma_n$, defined in \S\nolinebreak \ref{nolabel:explicit.C.Hodge}.
\end{nolabel}

\begin{nolabel}\label{nolabel:compat.of.G}
Let $\filf{p} = \filf{p}H^k(F(\ovX, n))$ and $\filw{p} = \filw{p}H^k(F(\ovX, n))$. We set
\begin{align}\label{eq:G.filtration.def.coho}
\filg{p}H^k(F(\ovX, n)) = \sum_{p' + p'' \leq p} \filf{-p'} \cap \filw{p''+n}.
\end{align}
This is an increasing filtration on $H^k(F(\ovX, n))$.
\end{nolabel}

\begin{prop}\label{prop:filtr.compat}
The filtration $G$ on $\Gamma_n$ which was defined in \S\nolinebreak \ref{nolabel:explicit.C.Hodge} and the filtration $G$ on $H^k(F(\ovX, n))$ defined by equation \ref{eq:G.filtration.def.coho} are compatible with the surjection $\Gamma_n \rightarrow H^k(F(\ovX, n))$ given by equation \eqref{eq:dR.sing}.
\end{prop}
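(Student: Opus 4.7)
The plan is to show that for $f \in \filg{-n+p}\Gamma_n$, the class $[f \, dx_1 \wedge \cdots \wedge dx_n] \in H^n(F(\ovX, n))$ lies in $\filg{-n+p}H^n(F(\ovX, n))$. In terms of the Deligne splitting $H^n_\C = \bigoplus_{p', q'} I^{p', q'}$, which satisfies $\filf{k} = \bigoplus_{p' \geq k} I^{p', \bullet}$ and $\filw{m} = \bigoplus_{p'+q' \leq m} I^{p', q'}$, an unwinding of \eqref{eq:G.filtration.def.coho} shows that $I^{p', q'} \subset \filg{-n+p}H^n$ if and only if $q' \leq p$. So the task reduces to verifying that the nonzero Hodge components of $[f \, dx_1 \wedge \cdots \wedge dx_n]$ have $q' \leq p$.

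First I would reduce, using Proposition~\ref{prop:Gamma.gens}, to the case where $f$ lies in the subalgebra $H_n \subset \Gamma_n$ generated by $\wp(x_i)$ and $\zeta(x_i, x_j)$. Each of these generators belongs to $\filg{-n+1}\Gamma_n$ (having polar locus inside a single divisor), so that a product of $p$ such generators lies in $\filg{-n+p}\Gamma_n$, and up to modifying by an element of $\langle \partial_i \Gamma_n \rangle$ this is the general form one must handle.

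Next I would pick a log resolution $\pi: \wtil{Y} \to E^n$ of the divisor $D$, so that $\wtil{D} = \pi^{-1}(D)_{\mathrm{red}}$ is normal crossings, and compute the Hodge and weight filtrations on $H^n(F(\ovX, n)) = H^n(\wtil{Y} \setminus \wtil{D})$ via the standard Deligne machinery applied to $\Omega^\bullet_{\wtil{Y}}(\log \wtil{D})$. For a product of $p$ generators, each factor $\zeta(x_i, x_j)$ contributes only simple poles along components of $\wtil{D}$, giving a single log factor, so that the pulled-back form $\pi^*(f \, dx_1 \wedge \cdots \wedge dx_n)$ lies in $\filw{p}\Omega^n_{\wtil{Y}}(\log \wtil{D})$ already. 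For each factor $\wp(x_i)$, which has a double pole, one uses the identity $\wp(x_i) \, dx_i = -d\zeta(x_i) - G_2 \, dx_i$ to trade the double-pole factor for a simple-pole one, up to an exact correction and a constant; since $\zeta(x_i)$ is only quasi-elliptic, this rearrangement must be carried out in the log de Rham complex on $\wtil{Y}$ rather than in $\Gamma_n$ itself, and then compared with the $C^\infty$ formula \eqref{eq:wp.is.dxbar} which makes the Dolbeault-$(0,1)$ nature of the surviving contribution explicit. Multiplicativity $\filw{\ell} \wedge \filw{\ell'} \subset \filw{\ell+\ell'}$ of the weight filtration in the log de Rham complex then places the form in $\filw{p}\Omega^n_{\wtil{Y}}(\log \wtil{D})$, and hence the class in $\filf{n} \cap \filw{n+p}H^n \subset \filg{-n+p}H^n$.

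The main obstacle is the non-normal-crossings nature of $D$ in $E^n$: the hypersurfaces $\{x_i = 0\}$, $\{x_j = 0\}$ and $\{x_i = x_j\}$ meet non-transversally along codimension-two strata, so one must understand the exceptional fibres of $\pi$ well enough to track how the pole-factor count on $E^n$ maps to the log-weight count on $\wtil{Y}$. A secondary subtlety is that $\zeta(x_i)$ is not a single-valued function on $\ovX$, preventing the naive use of the identity $\wp = -\zeta' - G_2$ directly inside $\Gamma_n$; this must be handled at the cohomological level, using the well defined form $d\zeta$ on $\ovX$ in place of $\zeta$ itself.
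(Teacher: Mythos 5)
Your reduction of the problem to a statement about Deligne bigraded components (that every component $I^{p',q'}$ of $[f\,dx_1\wedge\cdots\wedge dx_n]$ must have $q'\leq p$) is correct, and the reduction to products of $\wp(x_i)$ and $\zeta(x_i,x_j)$ via Proposition~\ref{prop:Gamma.gens} is exactly the paper's starting point. But the argument you then give does not deliver that bound, and its stated conclusion is false. You claim the class lands in $\filf{n}\cap \filw{n+p}H^n$; membership in $\filf{n}$ forces every Deligne component to have holomorphic index $n$, whereas already for $f=\wp(x_1)$ the paper's own computation \eqref{eq:wp.is.dxbar} shows $[\wp(x)\,dx]\notin \filf{1}H^1(\ovX)$ -- the surviving contribution is precisely the $(0,1)$-class $[d\ov{x}]$, as you yourself observe. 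Each $\wp$-factor lowers the holomorphic index, so the correct target is something like $\filf{n-p_1}\cap\filw{n+p_2}$ (with $p_1$ the number of $\wp$-factors and $p_2$ the number of $\zeta(x_i,x_j)$-factors), and note that $\filf{n-p}\cap\filw{n+p}$ is \emph{not} contained in $\filg{-n+p}$, so getting the bookkeeping right here is not cosmetic. Moreover the mechanism you invoke, multiplicativity of $W$ in the algebraic log de Rham complex, cannot see the $\wp$-factors at all: $\wp(x_i)\,dx_1\wedge\cdots\wedge dx_n$ has a double pole and is not a logarithmic form, and the substitution $\wp\,dx=-d\zeta-G_2\,dx$ is circular ($d\zeta$ \emph{is} that double-pole form). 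One is forced into a $C^\infty$ (Dolbeault--logarithmic) model, at which point both $F$ and $W$ must be tracked simultaneously.

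The two difficulties you flag as obstacles -- the non-normal-crossings divisor, and the identification of the classes of products of generators inside a concrete model of $H^*(F(\ovX,n))$ with its mixed Hodge structure -- are exactly where the paper's proof does its work, and your proposal leaves both unresolved. The paper avoids the log resolution entirely by using Totaro's model $E_2(n)$ for the cohomology of the configuration space (with $G_{ij}$ of type $(1,1)$ and the Gysin differential), and handles the identification by interposing the Brown--Levin-type dg-algebra $\X_n$ with dg-morphisms $\varphi:\X_n\to\Omega_{\dR}(\Gamma_n)$ and $\psi:\X_n\to E_2(n)$ compatible with filtrations; the Fay trisecant identity is what guarantees that the relations match (e.g.\ the Arnold relation \eqref{eq:G.Gelfand}, and identities such as $\zeta(x_i,x_j)^2=\wp(x_i)+\wp(x_j)+\wp(x_i-x_j)$, which mix terms with different naive pole counts and would otherwise make your ``count the factors'' argument ill-defined on cohomology). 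Without some substitute for this step, the weight-count heuristic does not constitute a proof.
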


\begin{nolabel}
The rest of this section is devoted to the proof of Proposition \ref{prop:filtr.compat}. In order to effect the comparison between the filtrations, we use a description of the cohomology of $F(\ovX, n)$ given by Totaro \cite{Totaro}. His results are in the general context of an $n$-dimensional orientable real manifold $M$, but we only need the case $M = \ovX = E \backslash \{0\}$.
\end{nolabel}

\begin{nolabel}
Let $G(n)$ denote the graded commutative algebra with generators $G_{ij}$, for $1 \leq i, j \leq n$ and $i \neq j$, all of degree $1$ and relations
\begin{align}
G_{ij} = G_{ji}, \quad G_{ij}^2 &= 0, \quad \text{for $i, j$ distinct}, \label{G.symmetry} \\ 
G_{ij}G_{ik} + G_{jk}G_{ji} + G_{ki}G_{kj} &= 0, \quad \text{for $i, j, k$ pairwise distinct}. \label{eq:G.Gelfand}
\end{align}
\end{nolabel}

\begin{nolabel}
Now let $\pi_i : \ovX^n \rightarrow \ovX$ denote the projection to the $i^{\text{th}}$ component, and similarly $\pi_{ij} : \ovX^n \rightarrow \ovX^2$. Denote by $\Delta \in H^2(\ovX^2)$ the class of the diagonal (see \S\nolinebreak \ref{nolabel:diag.class} below for an explicit description of $\Delta$). Now denote by $E_2(n)$ the bigraded algebra
\begin{align*}
H^*(\ovX)^{\otimes n} \otimes G(n) / \left< (\pi_i^*\eta - \pi_j^*\eta) \otimes G_{ij} \mid \eta \in H^*(\ovX) \right>,
\end{align*}
with $H^k(\ovX)$ in bidegree $(k, 0)$ and all $G_{ij}$ in bidegree $(0, 1)$, and differential $d$ of bidegree $(2, -1)$ defined by
\begin{align}\label{eq:Gysin}
d(G_{ij}) = \pi_{ij}^*\Delta.
\end{align}
The following is a special case of {\cite[Theorem 1]{Totaro}}.
\end{nolabel}

\begin{thm}
There is a spectral sequence converging to $H^*(F(\ovX, n))$ whose $E_2$ page is the bigraded differential algebra $E_2(n)$.
\end{thm}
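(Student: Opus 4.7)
The plan is to build the spectral sequence as the Leray spectral sequence for the open embedding $j: F(\ovX, n) \hookrightarrow \ovX^n$, whose complement is the union $D = \bigcup_{1 \leq i < j \leq n} D_{ij}$ of diagonal divisors $D_{ij} = \{x_i = x_j\}$. The Leray spectral sequence takes the form
\begin{align*}
E_2^{p,q} = H^p(\ovX^n, R^q j_* \underline{\Q}) \Rightarrow H^{p+q}(F(\ovX, n)),
\end{align*}
and the task is to identify this $E_2$ page with $E_2(n)$ as a bigraded algebra with its differential.

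To identify the algebra structure of $E_2$, I would compute $R^q j_* \underline{\Q}$ by a local-to-global analysis on the stratification by multi-diagonals. Each diagonal $D_{ij}$ has complex codimension one in $\ovX^n$, so away from triple intersections the stalk of $R^1 j_* \underline{\Q}$ on $D_{ij}$ is the Thom class of this divisor, which contributes the generator $G_{ij}$ in bidegree $(0,1)$. The K\"unneth formula identifies $H^*(\ovX^n, j_*\underline{\Q}) = H^*(\ovX^n)$ with $H^*(\ovX)^{\otimes n}$ sitting in bidegree $(*,0)$, accounting for the remaining generators.

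Next I would verify the defining relations. The anti-symmetry $G_{ij} = G_{ji}$ and the vanishing $G_{ij}^2 = 0$ are standard properties of Thom classes of smooth complex hypersurfaces. The module relation $(\pi_i^*\eta - \pi_j^*\eta) \otimes G_{ij} = 0$ holds because the restriction to $D_{ij}$ identifies the $i$-th and $j$-th factors, so $\pi_i^*\eta$ and $\pi_j^*\eta$ agree on $D_{ij}$ and therefore multiply the Thom class trivially. The Arnold--Gelfand relation \eqref{eq:G.Gelfand} is the substantial point: it is a local statement at a triple intersection $\{x_i = x_j = x_k\}$, where $\ovX^n$ is modelled on $\C^3 \times \C^{n-3}$ and the three diagonals become three lines through the origin in $\C^3$. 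The relation then reduces to Arnold's classical identity for the cohomology of the complement of three concurrent complex hyperplanes, visible through the explicit de Rham representatives $\omega_{ij} = \tfrac{1}{2\pi i}\, d\log(z_i - z_j)$ and the fact that $\omega_{ij}\wedge\omega_{ik} + \omega_{jk}\wedge\omega_{ji} + \omega_{ki}\wedge\omega_{kj}$ is a closed form of top degree in a space of insufficient cohomological dimension, hence exact and in fact zero on the nose.

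Finally, the differential $d_2$ of the Leray spectral sequence has bidegree $(2,-1)$ and, on a generator $G_{ij}$, is computed as the Gysin push-forward along the regular embedding $D_{ij} \hookrightarrow \ovX^n$ of the constant class $1 \in H^0(D_{ij})$. This returns the Poincar\'e-dual class of $D_{ij}$, which by definition is the pullback $\pi_{ij}^* \Delta$ of the diagonal class in $H^2(\ovX^2)$, establishing \eqref{eq:Gysin}. Extending by the Leibniz rule determines $d$ on all of $E_2(n)$. The main obstacle is the careful verification of the Arnold--Gelfand relation together with the identification of $d_2$ with the Gysin map; both require a genuinely local cohomological computation at the triple-intersection strata. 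Once these are in place, the identification of the $E_2$ page is an essentially formal comparison between the Leray filtration and the coniveau/Cousin filtration associated with the diagonal stratification of $\ovX^n$.
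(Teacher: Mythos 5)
Your sketch is essentially the proof of the result the paper is quoting: the theorem is presented as a special case of Totaro's Theorem 1, the paper supplies no independent argument, and Totaro's own proof is exactly the Leray spectral sequence of $j : F(\ovX, n) \hookrightarrow \ovX^n$ with the local Orlik--Solomon/Arnold analysis of $R^q j_* \underline{\Q}$ along the diagonal strata and the identification of $d_2$ on $G_{ij}$ with the Gysin map, so your approach coincides with the intended one. Two small corrections: in the local model at a triple point the three diagonals are concurrent hyperplanes in $\C^3$, not lines (as your next sentence in fact assumes), and $G_{ij}^2 = 0$ follows simply from $G_{ij}$ having odd total degree in the graded-commutative $E_2$ algebra rather than from properties of Thom classes of complex hypersurfaces, whose degree is even and whose squares need not vanish.
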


\begin{nolabel}\label{nolabel:diag.class}
We can write down $\Delta \in H^2(\ovX^2)$ explicitly as follows. For closed orientable manifolds $M$ \cite[Theorem 11.11]{Milnor-Stasheff} the class of the diagonal is given by
\begin{align*}
\sum_i (-1)^{\deg(e_i)} \pi_1^*(e_i) \pi_2^*(e^i)
\end{align*}
where the sum runs over a homogeneous basis $\{e_i\}$ of $H^*(M)$, and $\{e^i\}$ is the basis dual with respect to the Poincar\'{e} pairing. For the elliptic curve $E$, the Poincar\'{e} pairing on $H^*(E)$ can be deduced from the integral
\begin{align*}
\int_{[E]} \, dx \wedge d\ov x = \ov \tau - \tau,
\end{align*}
from which it quickly follows that
\begin{align}\label{eq:diag.class}
\Delta = \frac{1}{\ov\tau - \tau} \left( dx_1 - dx_2 \right) \wedge \left( d\ov x_1 - d\ov x_2 \right).    
\end{align}
For orientable but noncompact manifolds such as $M = \ovX$, Poincar\'{e} duality involves Borel-Moore homology, and $H^k(M) \cong H_{n-k}^{\text{BM}}(M)$, essentially since both these spaces have perfect pairings with $H^{n-k}_c(M)$. Ultimately the calculation to determine $\Delta$ is the same for $\ovX$ as that above for $E$ (pairing against compactly supported differential forms instead of all forms), and the class $\Delta$ is thus given by the same formula \eqref{eq:diag.class}.
\end{nolabel}

\begin{nolabel}
As explained in \cite{Totaro}, the differential \eqref{eq:Gysin} is compatible with mixed Hodge structures, since it is an example of a Gysin morphism. In the present case, the class $\pi_{ij}^* \Delta$ lies in Hodge degree $1$, and therefore so does $G_{ij}$.
\end{nolabel}

\begin{nolabel}
We illustrate with the case $n=2$. Let us write $H$ as shorthand for $H^*(\ovX)$. The cohomology $H^k(F(\ovX, 2))$ is nonzero for $k=0, 1$ and $2$ with dimension $1$, $4$ and $5$, respectively. The structure of $E_2(2)$ is as follows
\begin{align}\label{eq:spec.seq}
\begin{split}
\xymatrix{
1 & G_{12} H^0 \ar@{->}[drr]^{d} & G_{12} H^1 & \\
0 & (H^{\otimes 2})^0 & (H^{\otimes 2})^1 & (H^{\otimes 2})^2 \\
 & 0 & 1 & 2 \\
}    
\end{split}
\end{align}
where the Gysin map $d$ is the only nontrivial differential. We have seen above that the Hodge filtration in $H^1$ is $\filf{0}H^1 = H^1$ and $\filf{1}H^1$ spanned by the class $[dx]$. The Hodge filtration in $H^0$ is simply $\filf{0}H^0 = H^0$. The weight filtration in $H$ (and in $H^{\otimes n}$ in general) is the cohomological gradation $\filw{k} = H^k$, and the weight of $G_{ij}$ is $2$ {\cite[p. 1064]{Totaro}}. The cohomology $H^2(F(\ovX, 2))$ may have nontrivial contributions from positions $(2, 0)$ and $(1, 1)$ in Figure \eqref{eq:spec.seq}, and is $5$-dimensional, spanned by classes represented by
\begin{align*}
dx_1 \wedge dx_2, \quad d\ov{x}_1 \wedge d{x}_2, \quad d\ov{x}_1 \wedge d\ov{x}_2, \quad G_{12} \, dx \quad \text{and} \quad G_{12} \, d\ov{x}.
\end{align*}
These representatives lie, respectively, in Hodge degrees $h = 2, 1, 0, 2, 1$, and in weights $w = 2, 2, 2, 3, 3$. Their positions in the filtration $G$ defined in \eqref{eq:G.filtration.def.coho} are therefore $-h + w-2 = -2, -1, 0, -1, 0$, respectively. We have seen in equation \eqref{eq:wp.is.dxbar} above that $[d\ov{x}]$ is identified with $[\wp(x) \, dx]$ up to a scalar factor, and we will see below that $G_{ij}$ may be identified with $[\zeta(x_i, x_j) \, (dx_i - dx_j)]$ up to a scalar. We see then that the degrees just assigned to the four generators of $H^2(F(\ovX, 2))$ match with the definition given in \S\nolinebreak \ref{nolabel:explicit.C.Hodge} of the last section.
\end{nolabel}

\begin{nolabel}
To compare the algebraic de Rham complex $\Omega_{\dR}(\Gamma_n) = \wedge^\bullet \Omega_{\Gamma_n}$ of $\Gamma_n$ with Totaro's algebra $E_2(n)$, which are both commutative dg-algebras,  we pass via a third commutative dg-algebra with filtration very similar to one introduced by Brown and Levin in \cite{BL}. As in that paper, this algebra $\X_n$, defined below, admits dg-morphisms to $\Omega_{\dR}(\Gamma_n)$ and $E_2(n)$:
\begin{align*}
\xymatrix{
 & \X_n \ar@{->}[dr]^{\psi} \ar@{->}[dl]_{\varphi} & \\
\Omega_{\dR}(\Gamma_n) &  & E_2(n) \\
}
\end{align*}
compatible with filtrations (see Propositions \ref{prop:to.Omega} and \ref{prop:to.E2} and \S \ref{nolabel:filtr.Xi.Om.E2} below), thus proving Proposition \ref{prop:filtr.compat}.
\end{nolabel}

\begin{defn}
We denote by $\X_n$ the commutative dg-algebra with generators $\omega_i$ and $\nu_i$ for $i = 1, \ldots, n$ and $\omega_{ij}^{m}$ for $1 \leq i, j \leq n$ and $m \geq 0$ (all generators of degree $1$), satisfying the relations
\begin{align*}
\omega_{ij}^{(m)} + (-1)^{m} \omega_{ji}^{(m)} = 0, \qquad
\omega_{ij}^{(0)} = \omega_i - \omega_j,
\end{align*}
and the infinitely many quadratic relations given by the coefficients in powers of $\alpha$ and $\beta$ of
\begin{align}\label{eq:Fay.rel.X}
\Omega_{i\ell}(\alpha) \wedge \Omega_{j\ell}(\beta) + \Omega_{ji}(\beta) \wedge \Omega_{i\ell}(\alpha+\beta) + \Omega_{j\ell}(\alpha+\beta) \wedge \Omega_{ij}(\alpha) = 0,
\end{align}
where
\begin{align}\label{eq:Omegaij.exp}
\Omega_{ij}(\alpha) = \sum_{m \geq 0} \omega_{ij}^{(m)} \alpha^{m-1}.
\end{align}
The differential of $\X_n$ is defined by
\begin{align*}
d(\omega_i) = d(\nu_i) &= 0 \\
d(\omega_{ij}^{(m+1)}) &= (\nu_i - \nu_j) \wedge \omega_{ij}^{(m)}, \quad m \geq 0.
\end{align*}
\end{defn}

%Clearly the generators $\omega_{ij}^{(0)}$ are superfluous, but it is convenient to include them in order to state \eqref{eq:Fay.rel.X}.

\begin{nolabel}
In order to write down the morphism $\X_n \rightarrow \Om_{\dR}(\Gamma_n)$ we need to introduce some more terminology. We write
\begin{align*}
\E_m(x) = \sum_{\lambda \in \Lambda \backslash 0} \left( \frac{1}{(x+\la)^m} - \frac{1}{\la^m} \right)
\end{align*}
(note that $\E_1 = \zeta$ and $\E_2 = \wp$, and for $k \geq 3$ the function $\E_k$ is proportional to a derivative of $\wp$) and
\begin{align*}
\wtil\Omega_{ij}(\alpha)
&= \alpha^{-1} \exp\left[\alpha \left( \E_1(x_i-x_j) - \E_1(x_i) + \E_1(x_j) \right) \right] \\
& \quad \quad \quad \quad \quad \quad \times \exp\left[ - \sum_{m \geq 2} \frac{(-\alpha)^m}{m} \E_m(x_i-x_j) \right] (dx_i-dx_j),    
\end{align*}
which we then expand as
\begin{align*}
\wtil\Omega_{ij}(\alpha) = \sum_{m \geq 0} \wtil\omega_{ij}^{(m)} \alpha^{m-1}
\end{align*}
in parallel with \eqref{eq:Omegaij.exp}. In terms of the functions $\wp(x_i)$ and $\zeta(x_i, x_j)$ we obtain
\begin{align*}
\wtil\om_{ij}^{(0)} &= dx_i-dx_j, \\
\wtil\om_{i,j}^{(1)} &= \zeta(x_i, x_j) (dx_i-dx_j), \\
\wtil\om_{ij}^{(2)} &= \frac{1}{2}\left[ \zeta(x_i, x_j)^2 - \wp(x_i-x_j) \right] (dx_i-dx_j),
\end{align*}
and so on. Now we can write down the morphism $\X_n \rightarrow \Om_{\dR}(\Gamma_n)$.
\end{nolabel}

\begin{prop}\label{prop:to.Omega}
The assignments
\begin{align*}
\varphi(\nu_i) &= \wtil\nu_i = \wp(x_i) dx_i, \\
\varphi(\om_i) &= \wtil\om_i = dx_i, \\
\varphi(\om_{ij}^{(m)}) &= \wtil\om_{ij}^{(m)},
\end{align*}
extend to a morphism of dg-algebras $\varphi : \X_n \rightarrow \Om_{\dR}(\Gamma_n)$.
\end{prop}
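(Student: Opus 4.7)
The plan is to check that the defining relations of $\X_n$ hold in $\Om_{\dR}(\Gamma_n)$ after substitution, and that $\varphi$ intertwines the differentials. A convenient device throughout is to repackage the generators $\wtil\om_{ij}^{(m)}$ into the generating series $\wtil\Omega_{ij}(\alpha)$ introduced in the excerpt, since each family of relations and the differential compatibility can then be encoded as a single closed-form identity in $\alpha$.

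First I would handle the linear relations. Extracting the $\alpha^{-1}$ coefficient from the defining formula for $\wtil\Omega_{ij}(\alpha)$ gives $\wtil\om_{ij}^{(0)} = dx_i - dx_j = \wtil\om_i - \wtil\om_j$. For the antisymmetry I would establish the closed-form functional equation $\wtil\Omega_{ji}(-\alpha) = \wtil\Omega_{ij}(\alpha)$. This amounts to the standard parities $\zeta(-x) = -\zeta(x)$ and $\E_m(-x) = (-1)^m \E_m(x)$, under which both exponents in the formula for $\wtil\Omega_{ij}(\alpha)$ are invariant under the simultaneous swap $(i \leftrightarrow j,\ \alpha \leftrightarrow -\alpha)$, while the prefactor $\alpha^{-1}(dx_i - dx_j)$ is likewise invariant. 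Comparing coefficients of $\alpha^{m-1}$ then yields $\wtil\om_{ij}^{(m)} + (-1)^m \wtil\om_{ji}^{(m)} = 0$.

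Next, the quadratic relations \eqref{eq:Fay.rel.X}, after substitution, become Fay's trisecant identity for the Kronecker--Eisenstein form. The algebra $\X_n$ is modelled on the dg-algebra of Brown and Levin in \cite{BL}, in which this identity is the central structural input, and I would invoke their treatment to conclude. I expect this step to be the main obstacle: whereas the linear and differential relations reduce to short parity-and-derivative checks, Fay's identity, though classical, is a genuine computation that is cleanest to import wholesale.

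Finally, for the compatibility with $d$, it suffices to prove the generating-series identity
\begin{equation*}
d\wtil\Omega_{ij}(\alpha) \;=\; \alpha\,(\wtil\nu_i - \wtil\nu_j) \wedge \wtil\Omega_{ij}(\alpha),
\end{equation*}
since extracting the coefficient of $\alpha^{k}$ yields $d\wtil\om_{ij}^{(k+1)} = (\wtil\nu_i - \wtil\nu_j) \wedge \wtil\om_{ij}^{(k)}$ for $k \geq 0$ together with closure of $\wtil\om_{ij}^{(0)}$ at $k = -1$. Writing $\wtil\Omega_{ij}(\alpha) = f(x_i, x_j;\alpha)\,(dx_i - dx_j)$, both sides are proportional to $dx_i \wedge dx_j$ and the identity reduces to the scalar equation $(\partial_i + \partial_j) f = \alpha f\,(\wp(x_i) - \wp(x_j))$. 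The portion of the exponent of $f$ depending only on $x_i - x_j$ is annihilated by $\partial_i + \partial_j$, and applying $\zeta'(x) = -\wp(x) - G_2(\tau)$ to the remaining term $\alpha(\zeta(x_i-x_j) - \zeta(x_i) + \zeta(x_j))$ produces exactly $\alpha(\wp(x_i) - \wp(x_j))$, with the $G_2$ contributions cancelling.
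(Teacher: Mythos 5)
Your proposal is correct and follows essentially the same route as the paper: the linear relations via the symmetry $\wtil\Omega_{ji}(-\alpha)=\wtil\Omega_{ij}(\alpha)$ (which you make slightly more explicit than the paper's ``evident from the symmetry properties''), the quadratic relations \eqref{eq:Fay.rel.X} by invoking the Fay trisecant identity, and the differential compatibility by splitting the exponent of $\wtil\Omega_{ij}(\alpha)$ into the part depending only on $x_i-x_j$ (killed by $\partial_i+\partial_j$, resp.\ wedging to zero against $dx_i-dx_j$) and the term $-\zeta(x_i)+\zeta(x_j)$, whose derivative produces $\wp(x_i)dx_i-\wp(x_j)dx_j$ modulo harmless $G_2$ terms. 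No substantive differences to report.
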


\begin{proof}
The relations \eqref{eq:Fay.rel.X}, with $\om$ everywhere replaced with $\wtil\om$, need to be verified. They hold because of the Fay trisecant identity, recalled as Theorem \ref{thm:Fay} below. The relations
\begin{align*}
\wtil{\omega}_{ij}^{(m)} + (-1)^{m} \wtil{\omega}_{ji}^{(m)} = 0 \quad \text{and} \quad
\wtil{\omega}_{ij}^{(0)} = \wtil{\omega}_i - \wtil{\omega}_j
\end{align*}
are evident from the symmetry properties of $\wtil{\Om}$. Also evident are the differential relations
\begin{align*}
d(\wtil\omega_i) = d(\wtil\nu_i) = 0.
\end{align*}
It remains to verify the relation
\begin{align*}
d(\wtil\omega_{ij}^{(m+1)}) = (\wtil\nu_i - \wtil\nu_j) \wedge \wtil\omega_{ij}^{(m)}, \quad m \geq 0.
\end{align*}
Most of the summands in the definition of $\wtil\Om_{ij}$ depend on $x_i$ and $x_j$ only through $x_i-x_j$. Schematically we might express this as
\begin{align*}
\wtil\Omega_{ij}(\alpha) = \exp(\alpha h + f) (dx_i-dx_j),
\end{align*}
where $h = -\zeta(x_i)+\zeta(x_j)$ and $f = f(x_i-x_j)$ is a function of $x_i-x_j$. It follows that
\begin{align*}
d \wtil\Omega_{ij}(\alpha)
= \alpha \, dh \wedge \wtil\Omega_{ij}(\alpha)
= \alpha (\wp(x_i) dx_i -\wp(x_j) dx_j ) \wedge \wtil\Omega_{ij}(\alpha),
\end{align*}
and so
\begin{align*}
\sum_{m \geq 0} d \wtil\omega_{ij}^{(m)} \alpha^{m-1}
&= (\wp(x_i) dx_i -\wp(x_j) dx_j ) \wedge \sum_{m \geq 0} \wtil\omega_{ij}^{(m)} \alpha^{m}.
\end{align*}
Therefore $d(\wtil\om_{ij}^{(0)}) = 0$ and 
\begin{align*}
d(\wtil\omega_{ij}^{(m+1)}) &= (\wtil\nu_i - \wtil\nu_j) \wedge \wtil\omega_{ij}^{(m)}, \quad m \geq 0.
\end{align*}
\end{proof}

\begin{thm}[The Fay trisecant identity]\label{thm:Fay}
Let
\begin{align*}
F(z, \alpha) = \alpha^{-1} \exp\left[ - \sum_{m \geq 1} \frac{(-\alpha)^m}{m} \E_m(z) \right],
\end{align*}
then
\begin{align}\label{eq:Fay}
F(z, \alpha) F(w, \beta) - F(z, \alpha+\beta) F(w-z, \beta) - F(w, \alpha+\beta) F(z-w, \alpha) = 0.
\end{align}
\end{thm}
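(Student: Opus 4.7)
My approach is to prove \eqref{eq:Fay} analytically by first identifying $F(z,\alpha)$ as (essentially) the classical Kronecker--Eisenstein function, and then combining residue cancellation with Liouville's theorem. Observe that
\[
\partial_\alpha \log\bigl(\alpha F(z, \alpha)\bigr) = \sum_{m \geq 1}(-\alpha)^{m-1} \E_m(z) = \sum_{\lambda \in \La \setminus 0}\left(\frac{1}{z+\lambda+\alpha} - \frac{1}{\lambda+\alpha}\right),
\]
a sum expressible in closed form through $\zeta(z+\alpha)$ and $\zeta(\alpha)$ plus simple corrections. Integrating in $\alpha$ and exponentiating shows that, up to an explicit exponential prefactor, $F(z, \alpha)$ agrees with the classical Kronecker function $\sigma(z+\alpha)/\bigl(\sigma(z)\sigma(\alpha)\bigr)$. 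In particular $F(z, \alpha)$ extends to a meromorphic function of $z \in \C$ with simple poles of residue $1$ at each $z \in -\La$, and transforms under $z \mapsto z + \lambda$ by the factor $e^{\alpha \eta(\lambda)}$, where $\eta$ denotes the quasi-period of $\zeta$.

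Denote by $L(z, w; \alpha, \beta)$ the left-hand side of \eqref{eq:Fay}, regarded as a function of $z$ for generic fixed $w, \alpha, \beta$. The candidate poles in $z$ lie on $\La \cup (w + \La)$. At $z = 0$, the residues of the first two summands are $F(w, \beta)$ and $-F(w, \beta)$ respectively, while the third is regular there; these contributions cancel. At $z = w$, the last two summands yield opposite residues $\pm F(w, \alpha+\beta)$, and the first is regular. Quasi-periodicity transports the cancellations to the remaining lattice points, so $L$ is holomorphic in $z$. A bookkeeping check of the quasi-period factors shows that the three summands of $L$ all transform under $z \mapsto z + \lambda$ by the common factor $e^{\alpha \eta(\lambda)}$, so the ratio $L/F(z,\alpha)$ is elliptic and meromorphic in $z$ with possible poles only at $z \in -\La$. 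Examining the leading behaviour as $z \to 0$ (where $F(z, \alpha) \sim 1/z$ and the three summands of $L$ visibly telescope) shows this ratio is in fact holomorphic and vanishes there, hence is identically zero by Liouville; thus $L \equiv 0$.

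The step I expect to be the main obstacle is the meticulous tracking of quasi-period factors, signs, and residue cancellations: although none of this is deep, confusion of conventions is easy to commit. A cleaner alternative, should the bookkeeping prove unwieldy, is to write each $F(u, \gamma)$ explicitly in terms of the Jacobi theta function $\theta_1$ and invoke the classical three-term theta relation, reducing \eqref{eq:Fay} to a standard identity found in any treatise on theta functions.
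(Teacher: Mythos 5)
The paper offers no proof of this statement at all: it is quoted as the classical Fay identity for the Kronecker function (the relevant reference being Brown--Levin \cite{BL}), and the remark that follows it in the paper merely checks two low-order Taylor coefficients. Your proposal therefore supplies an actual argument where the paper supplies a citation, and it is the standard one: identify $F$ with the Kronecker function, show the left-hand side $L$ is pole-free in $z$ by residue cancellation, and conclude by ellipticity. The outline is sound and the residue and automorphy computations you describe do check out, but two details need repair. First, your opening display reproduces the paper's formula for $\E_m$ literally, summing over $\La \setminus 0$ with no $\lambda = 0$ contribution. Taken at face value this gives
\begin{align*}
\partial_\alpha \log\bigl(\alpha F(z,\alpha)\bigr) = \zeta(z+\alpha) - \zeta(\alpha) - \frac{1}{z+\alpha} + \frac{1}{\alpha} - G_2(\tau)\, z,
\end{align*}
hence an extra rational factor $z/(z+\alpha)$ in $F(z,\alpha)$; that $F$ is then \emph{regular} at $z=0$ and has its pole at $z=-\alpha$, which destroys the residue bookkeeping on which your argument rests (and is incompatible with the paper's own assertion that $\E_1 = \zeta$ and $\E_2 = \wp$). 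The intended normalisation includes the $\lambda=0$ term as $1/z^m$, yielding $F(z,\alpha) = e^{-G_2(\tau) z\alpha}\,\sigma(z+\alpha)/\bigl(\sigma(z)\sigma(\alpha)\bigr)$; you should state this explicitly, since the exponential prefactor (with multiplier $e^{\alpha(\eta(\lambda)-G_2(\tau)\lambda)}$, odd in $\lambda$ and additive in the second argument) is precisely what makes the three summands of $L$ carry a common automorphy factor. Second, the ratio $L/F(\cdot,\alpha)$ has its possible poles at the \emph{zeros} of $F(\cdot,\alpha)$, namely $z \in -\alpha + \La$, not at $z \in -\La$ as you write; the conclusion survives because an elliptic function with at most one simple pole per period parallelogram has vanishing residue there, hence is constant, and the constant is $0$ since $L$ is bounded near $z=0$ while $F(z,\alpha) \sim 1/z$ (this is the correct form of your final step; the value $L(0)$ itself need not visibly vanish). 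With these two points fixed the proof is complete; your fallback via the three-term theta relation is equally valid and is in effect what the paper's unproved citation amounts to.
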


\begin{rem}
If we multiply \eqref{eq:Fay} through by $\alpha\beta(\alpha+\beta)$ to remove negative powers of these variables, and expand as a power series in $\alpha$ and $\beta$, we recover classical elliptic function identities  upon equating coefficients. Indeed
\begin{align*}
[\alpha \beta]: \qquad \zeta(z-w) + \zeta(w-z) = 0
\end{align*}
is simply the fact that $\zeta$ is an odd function, while
\begin{align*}
[\alpha^2 \beta]: \qquad \left( \zeta(z-w) - \zeta(z) + \zeta(w) \right)^2 = \wp(z) + \wp(w) + \wp(z-w).
\end{align*}
\end{rem}

\begin{prop}\label{prop:to.E2}
The assignments
\begin{align*}
\psi(\nu_i) &= -\frac{1}{\ov{\tau} - \tau} [d\ov{x}_i], \\
\psi(\om_i) &= [dx_i], \\
\psi(\om_{ij}^{(0)}) &= [dx_i] - [dx_j], \\
\psi(\om_{ij}^{(1)}) &= G_{ij}, \\
\psi(\om_{ij}^{(m)}) &= 0, \quad m \geq 2,
\end{align*}
extend to a morphism of dg-algebras $\psi : \X_n \rightarrow E_2(n)$.
\end{prop}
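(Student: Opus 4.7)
The task is to check that $\psi$, defined on generators, respects the defining relations of $\X_n$ and intertwines the two differentials. The symmetries $\psi(\omega_{ij}^{(m)}) + (-1)^m \psi(\omega_{ji}^{(m)}) = 0$ and the identity $\psi(\omega_{ij}^{(0)}) = \psi(\omega_i) - \psi(\omega_j)$ are immediate from $G_{ij} = G_{ji}$ and the defining formulas.

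The substantive point is the Fay-type relation \eqref{eq:Fay.rel.X}. Since $\psi$ sends $\omega_{ij}^{(m)}$ to zero for $m \geq 2$, we have $\psi(\Omega_{ij}(\alpha)) = \alpha^{-1} A_{ij} + G_{ij}$, where $A_{ij} := [dx_i] - [dx_j]$. I plan to multiply the Fay relation through by $\alpha\beta(\alpha+\beta)$ and equate coefficients of each monomial $\alpha^p\beta^q$ with $p,q \geq 0$; after the clearing of denominators only finitely many monomials contribute. The coefficients of $\alpha$, $\beta$, $\alpha^2$, $\beta^2$, and $\alpha\beta$ should collapse, using graded anticommutativity together with the telescoping identity $A_{i\ell} = A_{ij} + A_{j\ell}$ and the symmetries $A_{ji} = -A_{ij}$, $G_{ji} = G_{ij}$, to expressions of the form $A_{ab} \wedge G_{ab}$. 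Each of these vanishes by the defining relation $(\pi_i^* \eta - \pi_j^* \eta) \otimes G_{ij} = 0$ of $E_2(n)$ applied to $\eta = [dx]$. The remaining coefficients, of $\alpha^2\beta$ and $\alpha\beta^2$, both reduce to $G_{i\ell} G_{j\ell} + G_{ij} G_{i\ell} + G_{j\ell} G_{ij}$, which (after using $G_{ij} = G_{ji}$) is the left-hand side of the Gelfand-type relation \eqref{eq:G.Gelfand} and therefore vanishes.

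For the differentials, $d\psi(\omega_i) = d\psi(\nu_i) = d\psi(\omega_{ij}^{(0)}) = 0$, matching the vanishing of $d$ on these generators of $\X_n$. The only nontrivial case of $d(\omega_{ij}^{(m+1)}) = (\nu_i - \nu_j) \wedge \omega_{ij}^{(m)}$ is $m = 0$, which demands
\[
d(G_{ij}) = \pi_{ij}^* \Delta = -\tfrac{1}{\ov\tau - \tau} ([d\ov x_i] - [d\ov x_j]) \wedge ([dx_i] - [dx_j]).
\]
This follows directly from the explicit formula \eqref{eq:diag.class} for $\Delta$ together with the sign arising from anticommuting the two odd factors. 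For $m = 1$ both sides vanish: $d\psi(\omega_{ij}^{(2)}) = d(0) = 0$, while $\psi((\nu_i - \nu_j) \wedge \omega_{ij}^{(1)}) = -\tfrac{1}{\ov\tau - \tau}([d\ov x_i] - [d\ov x_j]) \wedge G_{ij} = 0$ by the same defining relation of $E_2(n)$, applied now to $\eta = [d\ov x]$. For $m \geq 2$ both sides are manifestly zero.

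The main obstacle is the coefficient bookkeeping in the Fay verification; this is long but essentially mechanical once one recognises that the Gelfand identity, the relations $A_{ij} \wedge G_{ij} = 0$, and graded anticommutativity are the only inputs required.
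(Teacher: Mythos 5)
Your proposal is correct and follows essentially the same route as the paper: reduce to $\psi(\Omega_{ij}(\alpha)) = \alpha^{-1}([dx_i]-[dx_j]) + G_{ij}$, clear denominators in the Fay relation and check finitely many coefficients using the relation $(\pi_i^*\eta - \pi_j^*\eta)\otimes G_{ij}=0$ (for $\eta = [dx]$ and $\eta=[d\ov{x}]$) together with \eqref{eq:G.Gelfand}, and verify the differential via \eqref{eq:Gysin} and \eqref{eq:diag.class}. The only minor imprecision is that the coefficients of $\alpha$ and $\beta$ involve no $G$'s at all — they vanish purely from the telescoping identity and $a\wedge a=0$ for odd-degree $a$ — rather than reducing to terms of the form $A_{ab}\wedge G_{ab}$.
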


Recall that $E_2(n)$ is defined as a quotient of $H^*(\ovX)^{\otimes n} \otimes G(n)$. The terms in square brackets $[\ldots]$ in the statement of the proposition stand for the classes in cohomology of the algebraic de Rham complex of $\ovX$, considered as elements of $H^*(\ovX)$. 

\begin{proof}
From the definition of $\psi$ it is immediate that
\begin{align*}
\psi(\Om_{ij}(\alpha)) = \al^{-1}([dx_i] - [dx_j])  + G_{ij}.
\end{align*}
Applying $\psi$ to \eqref{eq:Fay.rel.X} and equating coefficients in $\alpha$ and $\beta$ leads to a finite list of identities in the generators of $E_2(n)$, to be checked. By symmetry it suffices to examine coefficients of $\al^p\beta^q$ with $p \geq q$. Multiplying through by $\al\beta(\al+\beta)$ to eliminate negative exponents we obtain nontrivial identities in degrees $(p, q)$ equal to $(1, 0)$, $(1, 1)$, $(2, 0)$ and $(2, 1)$. The first of these is
\begin{align*}
(\psi(\om^{(0)}_{ij}) - \psi(\om^{(0)}_{j\ell})) \wedge \psi(\om^{(0)}_{j\ell}) = 0,
\end{align*}
which is easily seen to be satisfied. The identity corresponding to degree $(2, 0)$ is
\begin{align*}
G_{i\ell} \wedge ([dx_i] - [dx_\ell]) = 0,
\end{align*}
which follows from the defining relation
\begin{align}\label{eq:G.kills}
G_{ij} \otimes (\pi_i^*\eta - \pi^*_j\eta) = 0    
\end{align}
of $E_2(n)$, for $\eta = [dx]$. The identity corresponding to degree $(1, 1)$ also holds because of \eqref{eq:G.kills}. Finally the identity in degree $(2, 1)$ is precisely \eqref{eq:G.Gelfand}.

The differential relation
\begin{align*}
d(\psi(\om_{ij}^{(m+1)})) = (\psi(\nu_i) - \psi(\nu_j)) \wedge \psi(\om_{ij}^{(m)})
\end{align*}
is trivially verified for $m \geq 2$. For $m = 1$ it follows from the defining relation \eqref{eq:G.kills} of $E_2(n)$, for $\eta = [d\ov{x}]$. For $m=0$ the relation asserts that
\begin{align*}
d(G_{ij}) = -\frac{1}{\ov\tau - \tau} ([d\ov{x}_i] - [d\ov{x}_j]) \wedge ([dx_i] - [dx_j]),
\end{align*}
which is verified by refering to \eqref{eq:Gysin} and \eqref{eq:diag.class}.
\end{proof}

\begin{nolabel}\label{nolabel:filtr.Xi.Om.E2}
We introduce an increasing filtration $\wtil{G}$ on $\X_n$, compatible with the dg-algebra structure, by putting $\om_i, \om_{ij}^{(0)} \in \wtil{G}^0\X_n$, putting $\om_{ij}^{(m)} \in \wtil{G}^1\X_n$ for $m \geq 1$, and putting $\nu_i \in \wtil{G}^1\X_n$ (for all $1 \leq i, j \leq n$). If we set $G^p\X_n = \wtil{G}^{p+n}\X_n$ then the morphisms $\varphi$ and $\psi$ are compatible with the three filtrations $G$.
\end{nolabel}

\section{Filtration on the chiral complex: Definition}\label{sec:first.filtration}

\begin{nolabel}
In this section we define filtrations on the chiral complexes $C_\bullet$ and $Q_\bullet$, which blend the filtration $G$ on $\Gamma_n$ with a choice of good increasing filtration on $V$.
\end{nolabel}

\begin{defn}\label{def:C.filtr.def}
Let $V$ be a graded vertex algebra and $G$ a good increasing filtration on $V$, which we extend to $V^{\otimes n+1}$ in the natural way. We continue to denote also by $G$ the increasing filtration on $\Gamma_n$ defined in \S \ref{nolabel:explicit.C.Hodge}. The ordered chiral complex $C_\bullet$ was defined in \eqref{eq:C.complex.def}. The Hodge filtration on $C_\bullet$ is defined as follows: $\filg{p}C_n$ is the image in $C_n$ of
\begin{align*}
\sum_{p' + p'' \leq p} \filg{p'} \Gamma_n \otimes \filg{p''} V^{\otimes n+1} \subset \wtil{C}_n,
\end{align*}
under the natural surjection. The passage to the quotient makes sense since the filtrations $G$ on $\Gamma_n$ and $V$ are stable under $\partial_i$ and $T$, respectively.
\end{defn}

\begin{prop}\label{prop:G.is.filtr.on.C}
The subspaces $\filg{p}C_n$ identified in Definition \ref{def:C.filtr.def} define an increasing filtration on the complex $C_\bullet$.
\end{prop}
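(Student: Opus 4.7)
The plan is to verify two assertions about the subspaces in Definition \ref{def:C.filtr.def}: first, that $\filg{p}C_n \subset \filg{p+1}C_n$, and second, that the differential is compatible with $G$, meaning $d(\filg{p}C_n) \subset \filg{p}C_{n-1}$. The first assertion is immediate, as enlarging $p$ merely enlarges the indexing set of the defining sum $\sum_{p'+p''\leq p}\filg{p'}\Gamma_n\otimes\filg{p''}V^{\otimes n+1}$. The second is the substantive content. Since $d=\sum(-1)^{n-i}d^{(ij)}+\sum(-1)^{n-i}p^{(i)}$, it suffices to verify that each component preserves the filtration. I would handle $p^{(i)}$ in detail; the argument for $d^{(ij)}$ is strictly parallel, with the diagonal $\{x_i=x_j\}$ and the Borcherds product $Y(a^i,x_i-x_j)a^j$ replacing $\{x_i=0\}$ and $Y(a^i,x_i)b$.

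Fix a representative $f\cdot a^1\otimes\cdots\otimes a^n\otimes b$ with $f\in\filg{p'}\Gamma_n$, $a^1\otimes\cdots\otimes b\in\filg{p''}V^{\otimes n+1}$, and $p'+p''\leq p$. Expanding $Y(a^i,x_i)b=\sum_m(a^i_{(m)}b)x_i^{-m-1}$, the image under $p^{(i)}$ is a sum over $m\in\Z$ of terms $f^{(m)}\cdot a^1\otimes\cdots\widehat{a}^i\cdots\otimes a^n\otimes a^i_{(m)}b$, where $f^{(m)}=[x_i^m]f\in\Gamma_{n-1}$. The good filtration property places $a^i_{(m)}b$ in the same $V$-filtration degree as $a^i\otimes b$ when $m<0$, and one degree lower when $m\geq 0$; hence the modified vertex algebra tensor lies in $\filg{p''}V^{\otimes n}$ for $m<0$ and in $\filg{p''-1}V^{\otimes n}$ for $m\geq 0$. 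The whole matter then reduces to the following bookkeeping claim on the coefficients: for $f\in\filg{p'}\Gamma_n$, one has $f^{(m)}\in\filg{p'+1}\Gamma_{n-1}$ when $m\geq 0$ and $f^{(m)}\in\filg{p'}\Gamma_{n-1}$ when $m<0$. Combining with the vertex algebra estimates yields total filtration degree $p'+p''\leq p$ in $C_{n-1}$ in either case.

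The main obstacle is this bookkeeping claim. I would prove it by decomposing $f$ as a sum of products of $p'+n$ single-divisor generators (via Proposition \ref{prop:Gamma.gens} together with the stability of $G$ under $\partial_i$, which permits derivatives of generators to be absorbed at the same level), and analyzing the Laurent expansion of each generator at $x_i=0$. Generators independent of $x_i$ contribute only at order $x_i^0$ with pole-count preserved. For generators of the form $\wp(x_i)$, $\zeta(x_i,x_j)$, or their derivatives in $x_i$, the explicit Laurent expansions show that polar coefficients are constants in the remaining variables, while holomorphic coefficients introduce at most one new single-divisor generator in $\Gamma_{n-1}$ (as in $\zeta(x_i,x_j)=-x_i^{-1}+0\cdot x_i^0-\wp(x_j)x_i-\cdots$). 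Consequently a negative-power coefficient of the full product must \emph{consume} at least one generator's pole, yielding total pole-count at most $p'+n-1$, while a non-negative-power coefficient has pole-count at most $p'+n$. Re-indexing to $\Gamma_{n-1}$, where filtration index $q$ corresponds to pole-count $q+n-1$, converts these to $\filg{p'}$ and $\filg{p'+1}$ respectively, completing the claim and hence the proposition.
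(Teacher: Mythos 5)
Your proof is correct and follows essentially the same route as the paper: reduce to the component maps $p^{(i)}$ and $d^{(ij)}$, expand the residue into Laurent coefficients $f^{(m)}$ paired with $a^i_{(m)}b$, and balance the pole count of $f^{(m)}$ against the filtration drop of the non-negative products in $V$. The paper organizes the case split by whether $f$ has a pole on the relevant divisor rather than by the sign of $m$, and is terser about the pole-counting claim, but the compensating mechanism is identical to yours.
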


\begin{proof}
The differential in $C_\bullet$ is expressed in \eqref{eq:general.diff.def} as a linear combination of terms $d^{(ij)}$ and $p^{(i)}$, defined in \eqref{eq:general.diff.def.comp1} and \eqref{eq:general.diff.def.comp2}, each of which operates on $\Gamma_n \otimes V^{\otimes n+1}$ by taking the vertex algebra product between two of the $V$ tensor factors, extracting a residue at the corresponding divisor ($x_i=x_j$ in the case of $d^{(ij)}$ and $x_i=0$ in the case of $p^{(i)}$), and finally relabeling variables. For instance, ignoring relabeling, $p^{(i)}$ sends
\begin{align*}
f(x_1, \ldots, x_n) \cdot a^1 \cdots a^n \mid b
\end{align*}
to
\begin{align*}
\res_{x_i=0} f(x_1, \ldots, x_n) \cdot a^1 \cdots \widehat{a}^i \cdots a^n \mid Y(a^i, x_i) b.
\end{align*}
The residue in turn is expanded as in \eqref{eq:res.ser.exp} into a sum of functions $f^{(m)}$ of $n-1$ variables, times $a^i_{(m)}b$. Now we suppose $f \in \filg{-n+p}\Gamma_n$, and consider in turn the cases in which $f$ has a pole on the divisor $x_i=0$, and in which it does not. In the former case the functions $f^{(m)}$ have one pole fewer than $f$ does, i.e., $f^{(m)} \in \filg{-(n-1)+p-1}\Gamma_{n-1} = \filg{-n+p}\Gamma_{n-1}$, and so the filtration is preserved by $p^{(i)}$ since the filtration $G$ in $V$ is compatible with the $m^{\text{th}}$ product. In the latter case we have $f^{(m)} \in \filg{-n+p+1}\Gamma_{n-1}$, but since $f$ has no pole along $x_i=0$, only terms in which $m \geq 0$ appear. Since, if $a^i \in \filg{k}V$ and $b \in \filg{\ell}V$, we have $a^i_{(m)}b \in \filg{k+\ell-1}V$ for all $m \geq 0$, this summand of $p^{(i)}$ also respects the filtration. Thus the filtration is stable under $p^{(i)}$. Similar considerations apply to the $d^{(ij)}$, and so we conclude that the Hodge filtration is stable under $d$.
\end{proof}

\begin{nolabel}
Since $C_n$ is the quotient of $\Gamma_n \otimes V^{\otimes n+1}$ by relations of the form $f \otimes T_i a \equiv \partial_i f \otimes a$, every element of $C_n$ can be written as a combination of terms of the form $f \otimes a$ where $f$ runs over a set of representatives of $\Gamma_n / \left<\partial_i \Gamma_n\right>_{i=1,\ldots, n} \cong H^n(F(\ovX, n))$.  Thus for example $\dim H^2(F(\ovX, 2)) = 5$, with $\filg{-2}$ spanned by $1$, the quotient $\filg{-1}/\filg{-2}$ spanned by the images of $\wp(x_1)$ and $\zeta(x_1, x_2)$, and the quotient $\filg{0}/\filg{-1}$ spanned by the images of $\wp(x_1)\wp(x_2)$ and $\wp(x_1)\zeta(x_1, x_2)$.
\end{nolabel}

\begin{nolabel}
Finally, it is clear that the filtration $G$ on $C_\bullet$ is compatible with the passage to coinvariants, and so induces a filtration on the unordered chiral complex $Q_\bullet$ also.
\end{nolabel}

\section{The associated graded and Poisson homology}\label{sec:Poison.complex}

\begin{nolabel}
In \S\nopagebreak \ref{nolabel:deg0.HP} we carried out a brief analysis of $H_0(C_\bullet)$ using the Hodge filtration, relating it to $\HP_0(R_V)$. In this section we extend this analysis to higher homological degrees.
\end{nolabel}

\begin{nolabel}\label{nolabel:filtration.E}
Denote by $\pi$ the canonical map
\begin{align*}
\grg{}(\Gamma_n) \otimes \grg{}(V^{\otimes n+1}) \rightarrow \grg{}(\Gamma_n \otimes V^{\otimes n+1}) \rightarrow \grg{}(C_n),
\end{align*}
and put
\begin{align*}
\file{q}\grg{}(C_n) = \pi \bigoplus_{p \geq q} \grg{p}(\Gamma_n) \otimes \grg{}(V^{\otimes n+1}).
\end{align*}
\end{nolabel}

\begin{prop}\label{prop:E.is.a.filtration}
The subspaces $\file{q}\grg{}(C_n)$ define a decreasing filtration on the complex $\grg{}(C_\bullet)$, concentrated in negative degrees.
\end{prop}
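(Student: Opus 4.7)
The plan is to verify three properties in turn: that $\file{\bullet}\grg{}(C_n)$ is a decreasing sequence of subspaces, that it is preserved by the differential of the complex $\grg{}(C_\bullet)$, and that it is supported in degrees $q \leq 0$. The first is immediate, since $\bigoplus_{p \geq q+1}$ is contained in $\bigoplus_{p \geq q}$.

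For stability under the differential, I will examine separately the two kinds of summands $d^{(ij)}$ and $p^{(i)}$ of \eqref{eq:general.diff.def}. Each such operator extracts a residue and therefore produces Laurent coefficients $f^{(m)}$ of $f \in \Gamma_n$ along the relevant divisor, paired with a vertex algebra product $a_{(m)}b$ on the $V$-side. The bookkeeping is already present in the proof of Proposition \ref{prop:G.is.filtr.on.C}: if $f$ has a pole on the divisor, then $f^{(m)}$ loses one pole factor, so its $\Gamma$-degree on $\Gamma_{n-1}$ equals that of $f$ on $\Gamma_n$; if $f$ has no pole there, then only $m \geq 0$ occur, but for those $m$ the product $a_{(m)}b$ strictly lowers the $G$-filtration on $V$ and so vanishes in $\grg{}(V)$. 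Either way the $\Gamma$-degree of the output is at least that of the input, and in fact only the first case survives in the associated graded. Thus $\bar d$ strictly preserves the $\Gamma$-grading, hence preserves $\file{q}$.

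For concentration in negative degrees, the key inputs are Proposition \ref{prop:filtr.compat} together with the computation of positions $-h+w-n$ performed in Section \ref{sec:Hodge.geometry}. That computation yields the uniform bound $\filg{0}H^n(F(\ovX, n)) = H^n(F(\ovX, n))$, equivalently $\grg{p}H^n(F(\ovX, n)) = 0$ for all $p \geq 1$. Via the identification \eqref{eq:dR.sing}, any $f \in \filg{p}\Gamma_n$ with $p \geq 1$ is congruent modulo $\sum_i \partial_i \Gamma_n$ to an element of $\filg{0}\Gamma_n$. Translating into $C_n$ using the identity $\partial_i g \otimes a \equiv -g \otimes T_i a$ (so that derivatives on the $\Gamma$-factor can be traded for $T$ on the $V$-factor without changing the class) and passing to the associated graded, one obtains $\pi\bigl(\grg{p}(\Gamma_n) \otimes \grg{}(V^{\otimes n+1})\bigr) = 0$ in $\grg{}(C_n)$ for every $p \geq 1$, whence $\file{q}\grg{}(C_n) = 0$ for $q \geq 1$.

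The main obstacle lies in this last step. A single substitution $\partial_i g \otimes a \mapsto -g \otimes T_i a$ preserves the $\Gamma$-degree of the representative rather than lowering it, so the reduction to $\filg{0}$ does not happen in one pass. The natural remedy is to invoke strictness of the Hodge and weight filtrations, which ensures that the auxiliary functions $g_i$ arising in the decomposition $f = f_0 + \sum_i \partial_i g_i$ can be chosen inside $\filg{p}\Gamma_n$ rather than in a larger filtered piece, and then to iterate. The iteration is controlled because $T$ acts with $G$-degree zero on $V$, so at the level of the associated graded the recursion does not increase the overall $G$-degree; combined with the fact that in $\grg{p}(C_n)$ the contribution from $f_0 \otimes a$ is already in the strictly lower filtration $\filg{p-1}$, one concludes that the class of $f \otimes a$ vanishes in $\grg{p+p''}(C_n)$.
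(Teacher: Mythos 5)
Your argument for stability of $\file{q}$ under the differential is in essence the paper's: the residue terms either keep the $\Gamma$-degree fixed (with $m \in \Z$ arbitrary) or raise it by one (with $m \geq 0$, hence lowering the $V$-degree by one), so no component of $d$ lowers the $\Gamma$-degree and $\file{q}$ is preserved. However, your parenthetical claim that ``only the first case survives in the associated graded'' and that $\bar d$ \emph{strictly} preserves the $\Gamma$-grading is false and should be excised. In the second case the term $f^{(m)} \otimes a_{(m)}b$ has $\Gamma$-degree $q+1$ and $V$-degree $p-1$, so its \emph{total} $G$-degree is still $q+p$: it does not vanish in $\grg{q+p}(C_{n-1})$, only in the further quotient $\gre{q}\grg{}(C_{n-1})$. (What vanishes is the class of $a_{(m)}b$ in $\grg{p}(V^{\otimes n})$, but the element genuinely lives in degree $p-1$, and the gain on the $\Gamma$-side exactly compensates.) This is not merely cosmetic: these degree-raising terms are precisely the ones that induce the differential on the second page of the $E$-spectral sequence and yield the Lichnerowicz differential in Section \ref{sec:Poison.complex}; if they vanished in $\grg{}(C_\bullet)$, the filtration $E$ would be a grading preserved by $d$ and Theorem \ref{thm:chiral.contains.HP} would be vacuous. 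Fortunately the correct half of your sentence --- that the output $\Gamma$-degree is at least the input $\Gamma$-degree --- is all that stability requires, so that part of the conclusion stands.

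On the concentration in degrees $[-n,0]$, you have correctly isolated the real subtlety: the substitution $\partial_i g \otimes a \mapsto -g \otimes T_i a$ preserves both the $\Gamma$-degree and the total $G$-degree of a representative, so an $f \in \filg{p}\Gamma_n$ with $p \geq 1$ is not reduced to $\Gamma$-degree $\leq 0$ in one pass. Your proposed remedy (strictness of the Hodge and weight filtrations to control the $g_i$ in $f = f_0 + \sum_i \partial_i g_i$, followed by an iteration) is only sketched; as written it does not explain why the iteration terminates nor why the class of $f \otimes a$ actually lands in the image of $\Gamma$-degree $\leq 0$ rather than merely being rewritten, so this remains a gap. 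For what it is worth, the paper's own proof of this proposition does not argue this point at all --- it proves only stability under $d$ --- and delegates the vanishing of $\file{q}$ for $q \geq 1$ to Proposition \ref{prop:filtr.compat} together with the bound $-h+(w-n) \leq 0$ on classes in $H^n(F(\ovX,n))$ and the fact that every element of $C_n$ admits a representative with $f$ drawn from representatives of $H^n(F(\ovX,n))$. Your instinct to supply the missing argument is sound, but what you have written does not yet close it.
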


\begin{proof}
Consider a representative $f \otimes a \otimes b$ of an element of $\grg{q} \otimes \grg{p}$. As in the proof of Proposition \ref{prop:G.is.filtr.on.C} we observe that $d(f \otimes a \otimes b)$ is a sum of two kinds of terms: (1) $\res(f) \otimes a_{(m)}b$ in which $\res(f) \in \grg{q}$ and $m \in \Z$, hence $a_{(m)}b \in \grg{p}$, and (2) $\res(f) \otimes a_{(m)}b$ in which $\res(f) \in \grg{q+1}$ and $m \in \Z_{\geq 0}$, hence $a_{(m)}b \in \grg{p-1}$. In particular there is no nontrivial component to the differential of the form $\grg{q} \otimes \grg{p} \rightarrow \grg{q-1} \otimes \grg{p+1}$. Thus the filtration $E$ is stable under the differential $d$.
\end{proof}

\begin{nolabel}
Let us write ${}'C_\bullet = \gre{}\grg{}C_\bullet$. This is a graded complex with nontrivial components ${}'C^p_n$ concentrated in degrees $(n, p)$ for $-n \leq p \leq 0$. We describe the homology of this complex in degree $(n, -n)$. We write $A$ for $\grg{}(V)$. The component ${}'C^{-n}_n$ is just $1 \cdot A^{\otimes n+1}$, and receives a differential from ${}'C^{-n}_{n+1}$. The component ${}'C^{-n}_{n+1}$ contains subspaces of the form $\wp(x_i) \cdot A^{\otimes n+2}$ and $\wp(x_i-x_j) \cdot A^{\otimes n+2}$ and $\zeta(x_i,x_j) \cdot A^{\otimes n+2}$, for $i \neq j$. (These subspaces span ${}'C^{-n}_{n+1}$, but are not linearly independent in general.) It is straightforward to compute
\begin{align}\label{eq:wp.xi.calc}
d\left( \wp(x_i) \cdot a^1 \cdots a^n \mid b \right) = (-1)^{n-i} 1 \cdot a^1 \cdots \widehat{a}^i \cdots a^n \mid a^i_{(-2)}b.
\end{align}
Indeed $\wp(x_i)$ has a pole only on $x_i = 0$, and so contributions to \eqref{eq:general.diff.def} involving residues along the divisors $x_i = x_j$ and $x_j = 0$, for $j \neq i$, contain a function with pole and so lie in $C_{n}^{-n+1}$. By definition these terms vanish in ${}'C_{n}$. The remaining term, the residue along the divisor $x_i=0$, is
\begin{align*}
(-1)^{n-i} 1 \cdot a^1 \cdots \widehat{a}^i \cdots a^n \mid \res_{x=0} \wp(x) Y(a^i, x)b.
\end{align*}
The leading term of the residue is $a^i_{(-2)}b$, with all subsequent terms $a^i_{(\geq 0)}b$ vanishing in the associated graded complex. Thus we arrive at \eqref{eq:wp.xi.calc}.

In the same way we have
\begin{align}\label{eq:wp.xixj.calc}
d\left( \wp(x_i-x_j) \cdot a^1 \cdots a^n \mid b \right) = (-1)^{n-i} 1 \cdot a^1 \cdots \widehat{a}^i \cdots a^i_{(-2)}a^j \cdots a^n \mid b.
\end{align}
Let us denote $J \subset A$ the linear span of terms of the form $a_{(-2)}b$ where $a, b \in V$. As in \S\nolinebreak \ref{nolabel:def.RV} above, the quotient $R = A / J$ is the Zhu Poisson algebra $R_V$. The images of \eqref{eq:wp.xi.calc} and \eqref{eq:wp.xixj.calc} inside ${}'C_n^{-n} \cong A^{\otimes n+1}$ are precisely the subspaces $A^{\otimes i} \otimes J \otimes A^{n-i}$, and the quotient of $A^{\otimes n+1}$ by their sum is $R^{\otimes n+1}$.

Upon passage to $\Sigma_n$-coinvariants \eqref{eq:symm.grp.action}, the tensor product $R^{\otimes n+1}$ above is further reduced to $R \otimes \bigwedge^n R$. We claim now that the quotient by the images of terms of the form $\zeta(x_i,x_j) \cdot A^{\otimes n+2}$ has the effect of reduction from $R \otimes \bigwedge^n R$ to $\bigwedge^n_R \Omega_R$. Indeed, as in the cases of \eqref{eq:wp.xi.calc} and \eqref{eq:wp.xixj.calc} above, we compute
\begin{align}\label{eq:zeta.xixj.calc}
d\left( \zeta(x_i, x_j) \cdot a^1 \cdots a^n \mid b \right) &=
-(-1)^{n-i} a^1 \cdots \widehat{a}^i \cdots a^n \mid a^i_{(-1)}b
+ (-1)^{n-j} a^1 \cdots \widehat{a}^j \cdots a^n \mid a^j_{(-1)}b \\
& + (-1)^{n-i} a^1 \cdots \widehat{a}^i \cdots a^i_{(-1)}a^j \cdots a^n \mid b.
\end{align}
After identifying the class in $H_n({}'C_{\bullet})^{-n}$ of $1 \cdot a^1 \cdots a^n \mid b$ with $b \cdot da^1 \wedge \cdots \wedge da^n$, the vanishing of \eqref{eq:zeta.xixj.calc} becomes the relation
\begin{align}\label{eq:zeta.Kah.rels}
\begin{split}
b \, da^1 \wedge \cdots da^i \cdots d(a^i a^j) \cdots \wedge da^n
 &= a^i b \, da^1 \wedge \cdots \widehat{da^i} \cdots da^j \cdots \wedge da^n \\
 &- (-1)^{i-j} a^j b \, da^1 \wedge \cdots da^i \cdots \widehat{da^j} \cdots \wedge da^n.
\end{split}
\end{align}
For $n=2$ this recovers the defining relation for the $R$-module of K\"{a}hler differentials $\Omega_R$, as in Definition \ref{defn:Kahler}. In general, the quotient by the relations \eqref{eq:zeta.Kah.rels} yields the $R$-module of K\"{a}hler differentials $\bigwedge^n_{R} \Omega_R$. In summary, we have proved
\end{nolabel}
\begin{prop}
Let $V$ be a finitely strongly generated graded vertex algebra, with associated Zhu Poisson algebra $R = R_V$. Then
\[
H_n({}'C_{\bullet})^{-n} \cong \bigwedge\nolimits^{\!n}_{R} \Omega_R.
\]
\end{prop}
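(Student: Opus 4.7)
The plan is to compute $H_n({}'C_\bullet)^{-n}$ by exploiting the fact that ${}'C^p_m$ is concentrated in the range $-m \leq p \leq 0$, so the outgoing differential $d : {}'C^{-n}_n \to {}'C^{-n}_{n-1}$ vanishes automatically, and $H_n({}'C_\bullet)^{-n}$ is simply the cokernel of the incoming map $d : {}'C^{-n}_{n+1} \to {}'C^{-n}_n$.

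First I would identify both spaces explicitly. Since $\grg{-n}\Gamma_n = \C \cdot 1$, one obtains ${}'C^{-n}_n = A^{\otimes n+1}$ with $A = \grg{}V$. For the source, $\grg{-n}\Gamma_{n+1}$ is spanned, modulo the $\partial_i + T_i$-relations built into $C_{n+1}$, by $\wp(x_i)$, $\wp(x_i-x_j)$, and $\zeta(x_i, x_j)$, using Proposition \ref{prop:Gamma.gens} to handle derivatives. The key computational point is that in the expression \eqref{eq:general.diff.def} for $d$, only residues taken along divisors where the coefficient function has a genuine pole survive in $\grg{-n}$; any other residue leaves behind a coefficient in $\grg{-n+1}$ and hence vanishes in $E^{-n}$. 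This yields the formulas \eqref{eq:wp.xi.calc}, \eqref{eq:wp.xixj.calc}, and \eqref{eq:zeta.xixj.calc} for the three families of generators.

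The images of the $\wp$-type differentials assemble precisely into $\sum_i A^{\otimes i} \otimes J \otimes A^{\otimes n-i}$, where $J \subset A$ is spanned by $\{a_{(-2)}b\}$; the quotient of $A^{\otimes n+1}$ by this sum is $R^{\otimes n+1}$. Passage to $\Sigma_n$-coinvariants via the sign action \eqref{eq:symm.grp.action} further collapses $R^{\otimes n+1}$ to $R \otimes \bigwedge^n R$. Finally, under the identification $1 \cdot a^1 \cdots a^n \mid b \leftrightarrow b\, da^1 \wedge \cdots \wedge da^n$, the remaining $\zeta$-relations translate into the K\"{a}hler identities \eqref{eq:zeta.Kah.rels}, exhibiting the presentation of $\bigwedge^n_R \Omega_R$ as a quotient of $R \otimes \bigwedge^n R$ per Definition \ref{defn:Kahler}.

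The main obstacle is confirming that no additional relations creep in: namely, that the $\partial_i + T_i$-reductions of derivatives such as $\partial_k \wp(x_i)$ or $\partial_k \zeta(x_i, x_j)$ do not impose independent constraints beyond those captured above. This reduces to the spanning statement of Proposition \ref{prop:Gamma.gens}. A secondary, essentially bookkeeping issue is to match all signs consistently: the conventions in \eqref{eq:general.diff.def}, the antisymmetrization in \eqref{eq:symm.grp.action}, and the sign on the right-hand side of \eqref{eq:zeta.Kah.rels} must all be aligned for the $\zeta$-relations to reproduce the defining relation of $\Omega_R$ exactly.
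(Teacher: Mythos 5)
Your proposal follows essentially the same route as the paper: identify ${}'C^{-n}_n$ with $A^{\otimes n+1}$, note that the homology in this bidegree is the cokernel of the incoming differential from the $\wp(x_i)$, $\wp(x_i-x_j)$ and $\zeta(x_i,x_j)$ generators of ${}'C^{-n}_{n+1}$, use the observation that only residues along genuine poles survive in the associated graded, and then quotient successively to $R^{\otimes n+1}$, to $R \otimes \bigwedge^n R$ via the sign action of $\Sigma_n$, and finally to $\bigwedge^n_R \Omega_R$ via the $\zeta$-relations. The points you flag as obstacles (derivatives handled by Proposition \ref{prop:Gamma.gens}, sign bookkeeping) are exactly the ingredients the paper relies on, so the argument is sound.
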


\begin{nolabel}
The graded pieces of the homology of ${}'C_\bullet$ form the first page in a spectral sequence. The second page is equipped with differentials
\begin{align*}
H_n({}'C_{\bullet})^{-p} \rightarrow H_{n-1}({}'C_{\bullet})^{-p+1},
\end{align*}
which we now determine for $p=n$, following the standard recipe. As per the identification above, an element $a \, dx \in \Omega_R$ is represented by $1 \cdot x \mid a \in 1 \cdot A \otimes A$ which in turn is sent to $x_{(0)}a$ by the differential. This term vanishes in $\gr_E$ and reappears in the second page of the spectral sequence. In $R$ this term is simply $-\{a, x\}$. Similarly the image of $a \, dx \wedge dy \in \wedge^2 \Omega_R$ in $\Omega_R$ is
\begin{align*}
\{a, x\} \, dy - \{a, y\} \, dx - a \, d(\{x, y\}).
\end{align*}
A similar pattern continues to all orders, and we obtain the Lichnerowicz complex (defined below) of the Poisson algebra $R = R_V$.
\end{nolabel}

\begin{nolabel}
The Poisson homology $\HP_\bullet(P)$ of a Poisson algebra $P$ is the homology of an explicit complex $(C_\bullet(P), d)$, known as the Lichnerowicz complex, constructed from $P$. Let $\Omega_P$ denote the K\"{a}hler differentials, constructed using the commutative algebra structure of $P$. We set $C_0(P) = P$, $C_1(P) = \Omega_{P}$ and in general $C_n(P) = \wedge_{P}^n \Omega_P$, equipped with the following differentials:
\begin{align*}
d \left( a \, dx_1 \wedge \ldots \wedge dx_n \right)
&= \sum_{i=1}^n (-1)^{n-i} \{x_i, a\} \, dx_1 \wedge \ldots \wedge \widehat{dx}_i \wedge \ldots \wedge dx_n \\
& + \sum_{i < j} (-1)^{n-i}  a \, dx_1 \wedge \ldots \wedge \widehat{dx}_i \wedge \ldots \wedge d(\{x_i, x_j\}) \wedge \ldots \wedge dx_n.
\end{align*}
This is the Lichnerowicz complex, and its homology in degree $n$ is denoted $\HP_n(P)$, the Poisson homology of $P$. We summarise the findings of this section in the following proposition.
% Of course this is the familiar Chevalley--Eilenberg differential, manipulated into a form more similar to \eqref{eq:general.diff.def}. 
\end{nolabel}

\begin{thm}\label{thm:chiral.contains.HP}
The homology $H_n(\gr^G C_\bullet)$ carries a filtration $E$, concentrated in degrees $-n, \ldots 0$. The graded piece in degree $-n$ is identified with the Poisson homology of $R_V$, that is
\begin{align*}
\gre{-n} H_{n}(\grg{}C_\bullet) \cong \HP_n(R_V).
\end{align*}
\end{thm}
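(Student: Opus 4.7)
The plan is to invoke the spectral sequence associated with the filtration $E$ on the chain complex $\grg{} C_\bullet$. Since $E$ is bounded ($\file{-n} \grg{} C_n = \grg{} C_n$ and $\file{1} \grg{} C_n = 0$), it converges to $H_n(\grg{} C_\bullet)$ with $E^\infty_{n, -p} = \gre{-p} H_n(\grg{} C_\bullet)$, and its $E^1$-page at position $(n, -p)$ is $H_n({}'C_\bullet)^{-p}$, the homology of the associated graded complex ${}'C_\bullet = \gre{} \grg{} C_\bullet$.

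First I would carry out the identification $E^1_{n, -n} = H_n({}'C_\bullet)^{-n} \cong \wedge^n_{R_V} \Omega_{R_V}$ at the bottom diagonal. Since $\grg{-n}(\Gamma_n) = \C \cdot 1$, the space ${}'C_n^{-n}$ reduces to $A^{\otimes n+1}$ with $A = \grg{} V$. By Proposition \ref{prop:Gamma.gens} together with the Hodge-weight description of Section \ref{sec:Hodge.geometry}, the bottom layer $\grg{-n}(\Gamma_{n+1})$ is spanned by $\wp(x_i)$, $\wp(x_i-x_j)$ and $\zeta(x_i, x_j)$. Direct computation, as in \eqref{eq:wp.xi.calc}, \eqref{eq:wp.xixj.calc}, \eqref{eq:zeta.xixj.calc}, shows that the image of $d: {}'C^{-n}_{n+1} \to {}'C^{-n}_n$ contains the Zhu ideal $J \subset A$ acting on each tensor factor (from the $\wp$-terms) and the K\"ahler relations of Definition \ref{defn:Kahler} (from the $\zeta$-terms). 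After passage to $\Sigma_n$-coinvariants, the quotient is precisely $\wedge^n_{R_V} \Omega_{R_V}$.

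Next I would verify that the induced differential $d^1 : E^1_{n, -n} \to E^1_{n-1, -n+1}$ is the Lichnerowicz differential. Lifting the class $a \, dx_1 \wedge \cdots \wedge dx_n$ to $1 \cdot x_1 \cdots x_n \mid a \in \grg{} C_n$ and applying $d$, the leading-order terms vanish after reduction by the Zhu ideal, and the next-order contributions produce the Poisson brackets $\{x_i, a\}$ and $\{x_i, x_j\}$ with exactly the signs and positions of the Lichnerowicz formula. Hence $E^2_{n, -n} = \HP_n(R_V)$.

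Finally, incoming higher differentials $d^r : E^r_{n+1, -n-r} \to E^r_{n, -n}$ for $r \geq 2$ vanish because the source $(n+1, -n-r)$ lies below the minimum filtration index $-(n+1)$ of $C_{n+1}$; outgoing higher differentials are to be checked directly from the lift-and-expand description of $d$ on the bottom diagonal. The main obstacle is the bottom-layer identification $H_n({}'C_\bullet)^{-n} \cong \wedge^n_{R_V} \Omega_{R_V}$: it requires both that $\wp(x_i)$, $\wp(x_i-x_j)$, $\zeta(x_i, x_j)$ genuinely exhaust $\grg{-n}(\Gamma_{n+1})$, for which the Totaro-type description of Section \ref{sec:Hodge.geometry} is essential, and that the images of these generators assemble into exactly the K\"ahler relations, no more and no less, which turns on careful sign and coinvariant bookkeeping together with the classical identity \eqref{eq:zeta.wp.identity}.
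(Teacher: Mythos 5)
Your proposal follows essentially the same route as the paper: the spectral sequence of the filtration $E$ on $\grg{}C_\bullet$, the identification of the bottom diagonal $H_n({}'C_\bullet)^{-n}$ with $\bigwedge^n_{R_V}\Omega_{R_V}$ via the computations \eqref{eq:wp.xi.calc}, \eqref{eq:wp.xixj.calc}, \eqref{eq:zeta.xixj.calc}, the recognition of $d^1$ as the Lichnerowicz differential, and the boundedness argument killing incoming higher differentials. The one step you leave ``to be checked'' --- vanishing of the outgoing differentials $d^r$ for $r\geq 2$ on the corner --- is exactly the point the paper settles by appealing to the observation in the proof of Proposition \ref{prop:E.is.a.filtration}, namely that $d$ has components only of $E$-degree $0$ and $1$ (no component of the form $\grg{q}\otimes\grg{p}\to\grg{q+r}\otimes\grg{p-r}$ with $r\geq 2$), so you should close that loop rather than defer it.
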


\begin{proof}
The filtration $E$ on $\grg{}C_\bullet$ yields a spectral sequence with first page $H({}'C_{\bullet})$, converging to $H_\bullet(\grg{}C)$. Comparing the explicit complex for $\HP_\bullet(R_V)$ with the discussion in the preceding sections, it is evident that $\gre{-n} H_{n}(\grg{}C_\bullet)$ is a subquotient of $\HP_n(R_V)$ (the latter being obtained on the second page of the spectral sequence in this degree). It remains to confirm that the spectral sequence collapses at this page. But this follows from the observation used in the proof of Proposition \ref{prop:E.is.a.filtration}.
\end{proof}

\section{Classical freedom and finiteness results}\label{sec:consequences}

\begin{nolabel}
In this section we focus attention on the structure of chiral homology in degree $1$. We have ${}'C_1 = {}'C_1^{-1} \oplus {}'C_1^{0}$, and we have seen above that the homology of ${}'C_\bullet$ in bidegree $(1, -1)$ is given by $\Omega_R$. We now analyse the homology of ${}'C_{1}^0$. The component ${}'C_{1}^0$ is spanned by the class of $\wp(x_1) \cdot A^{\otimes 2}$. As we saw in \eqref{eq:wp.xi.calc} the restriction of the differential to this component is given by
\begin{align}\label{eq:wp.diff}
d(\wp(x_1) \cdot a^1 \mid b) = a^1_{(-2)}b.
\end{align}

The component ${}'C_{2}^0$ is spanned by subspaces of the form $\wp(x_1) \wp(x_2) \cdot A^{\otimes 3}$ and $\wp(x_1) \zeta(x_1, x_2) \cdot A^{\otimes 3}$. It is easy to compute
\begin{align}\label{eq:wp.im.pp}
d(\wp(x_1) \wp(x_2) \cdot a^1 a^2 \mid b) = \wp(x_1) \cdot \left( a^1 \mid a^2_{(-2)}b - a^2 \mid a^1_{(-2)}b \right).
\end{align}
We now compute the image under $d$ of $\wp(x_2) \zeta(x_1, x_2) \cdot a^1 a^2 \mid b$. There are contributions from residues at the three divisors $x_1=0$, $x_2=0$ and $x_1=x_2$. At the divisor $x_1=0$, the only nonzero contribution is
\[
\left. \left( \res_{x_1=0} \wp(x_2) \zeta(x_1) \cdot a^2 \mid Y(a^1, x_1) b \right) \right|_{x_2=x_1} = \wp(x_1) \cdot a^2 \mid a^1_{(-1)}b.
\]
At the divisor $x_2=0$, we have two contributions. The first is
\[
\res_{x_2=0} \wp(x_2) \zeta(x_2) \cdot a^1 \mid Y(a^2, x_2) b = 1 \cdot a^1 \mid \left( a^2_{(-3)}b + \cdots \right).
\]
According to the definition of the filtration $E$, see Section \ref{nolabel:filtration.E}, this contribution vanishes in ${}'C_\bullet$, essentially because the function $1$ is two steps lower in the Hodge filtration than $\wp(x_2) \zeta(x_1, x_2)$. The second contribution is
\begin{align*}
& \res_{x_2=0} \wp(x_2) \left( \zeta(x_1-x_2) - \zeta(x_1) \right) \cdot a^1 \mid Y(a^2, x_2) b \\
&= \res_{x_2=0} \wp(x_2) \sum_{k \in \Z_{\geq 1}} \frac{1}{k!} \left[\partial^{k}\zeta\right](x_1) (-x_2)^k \cdot a^1 \mid Y(a^2, x_2) b.
\end{align*}
The product $x_2^k \wp(x_2)$ is regular when $k \geq 2$, and the corresponding summands vanish in the associated graded. On the other hand $\partial \zeta(x) = -\wp(x) - G_2(\tau)$, so the sum becomes
\begin{align*}
\wp(x_1) \res_{x_2=0} x_2 \wp(x_2) \cdot a^1 \mid Y(a^2, x_2) b
= \wp(x_1) \cdot a^1 \mid a^2_{(-1)}b.
\end{align*}
At the divisor $x_1 = x_2$, the only nonzero contribution is
\[
\left. \left( \res_{x_1=x_2} \wp(x_2) \zeta(x_1-x_2) \cdot Y(a^1, x_1-x_2) a^2 \mid b \right) \right|_{x_2=x_1} = \wp(x_1) \cdot a^1_{(-1)}a^2 \mid b.
\]
Combining these contributions, with appropriate signs, yields
\begin{align}\label{eq:wp.im.zp}
d(\wp(x_2) \zeta(x_1, x_2) \cdot a^1 a^2 \mid b) = \wp(x_1) \cdot \left( a^1 \mid a^2_{(-1)}b + a^2 \mid a^1_{(-1)}b - a^1_{(-1)}a^2 \mid b \right).
\end{align}

We recall that $A$ is a commutative algebra with product $a b = a_{(-1)}b$. The quotient of $\wp(x_1) \cdot A^{\otimes 2}$ by the vector subspace spanned by the right hand side of \eqref{eq:wp.im.zp} is therefore identified with the module $\Om_A$ of K\"{a}hler differentials of $A$, via $\wp(x_1) \cdot a \mid b \equiv b \, da$. Furthermore $A$ is a differential algebra with derivation $T : A \rightarrow A$ and the relation $a_{(-2)}b = (Ta) \cdot b$ holds in $A$. It follows that the morphism \eqref{eq:wp.diff} becomes
\[
\Om_A \rightarrow A, \qquad b \, da \mapsto b \, (Ta),
\]
and the right hand side of \eqref{eq:wp.im.pp} is the image of
\[
\bigwedge\nolimits^{\!2}_A \Om_A \rightarrow \Om_A, \qquad b \, da^1 \wedge da^2 \mapsto b \, (Ta^1) \, da^2 - b \, (Ta^2) \, da^1.
\]
We see, therefore, that the homology of ${}'C_\bullet$ in this degree recovers the Koszul homology $H_1(A, T)$ introduced in \S\nolinebreak \ref{nolabel:Koszul.complex} above.
\end{nolabel}

\begin{thm}
Let $V$ be a graded vertex algebra, finitely strongly generated and classically free. Let $E$ be a smooth elliptic curve and $\CA$ the chiral algebra on $E$ associated with $V$. Then
\begin{align*}
\dim \Hch_1(E, \CA) \leq \dim \HP_1(R_V).
\end{align*}
\end{thm}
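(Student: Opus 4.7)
The plan is to chain together two spectral sequences with a vanishing input from classical freeness. Since $Q_1 = C_1$ and $Q_0 = C_0$ (the symmetric group actions are trivial in these degrees, as $I_1 = 0$ and $I_0 = 0$), one checks that $H_1(Q_\bullet) = H_1(C_\bullet)$, and by the comparison with Beilinson--Drinfeld chiral homology (Remark \ref{rem:comparison.BD}, building on \cite{EH21}) this equals $\Hch_1(E, \CA)$. So it suffices to bound $\dim H_1(C_\bullet) \leq \dim \HP_1(R_V)$.

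First I would apply the Hodge filtration $G$ of Definition \ref{def:C.filtr.def}. Since this is a filtration on $C_\bullet$ by subcomplexes (Proposition \ref{prop:G.is.filtr.on.C}), and is bounded below and exhaustive in each fixed homological degree (in the appropriate sense, given finite strong generation of $V$), the standard spectral sequence of a filtered complex converges and yields
\[
\dim H_1(C_\bullet) \leq \dim H_1(\grg{}C_\bullet).
\]
Next I would apply the filtration $E$ on $\grg{}C_\bullet$ from Section \ref{sec:Poison.complex} (Proposition \ref{prop:E.is.a.filtration}), whose associated graded is ${}'C_\bullet = \gre{}\grg{}C_\bullet$. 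Since $E$ is concentrated in degrees $-n, \ldots, 0$ in homological degree $n$, this yields a second convergent spectral sequence. In homological degree $n = 1$ the first page ${}'C_1 = {}'C_1^{-1} \oplus {}'C_1^{0}$ contributes in exactly two slots, and the content of Theorem \ref{thm:chiral.contains.HP} together with the explicit analysis carried out in Section \ref{sec:consequences} identifies these contributions: after further differentials of the spectral sequence, the $E$-degree $-1$ graded piece of $H_1(\grg{}C_\bullet)$ is $\HP_1(R_V)$, while the $E$-degree $0$ graded piece is identified (via the computations \eqref{eq:wp.diff}, \eqref{eq:wp.im.pp}, \eqref{eq:wp.im.zp}) with a subquotient of the first Koszul homology $H_1(A, T)$ of $A = \grg{}V$.

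The final step is to invoke classical freeness: by Theorem \ref{thm:classically.free.koszul}, $V$ classically free means $H_1(A, T) = 0$, so the $E$-degree $0$ contribution to $H_1(\grg{}C_\bullet)$ vanishes and $\dim H_1(\grg{}C_\bullet) = \dim \HP_1(R_V)$. Combining with the bound from the first spectral sequence gives the conclusion. The main technical obstacle is bookkeeping for the two spectral sequences — verifying that the filtrations behave well enough (boundedness, exhaustiveness, compatibility with passage through $\grg{}$ and $\gre{}$) that the two dimension inequalities really compose, and that no pathological higher differentials can resurrect contributions from the $E$-degree $0$ slot on later pages after we have already used the Koszul vanishing on the first page. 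This is controlled by the observation (already used in the proof of Theorem \ref{thm:chiral.contains.HP}) that the differential of $\grg{}C_\bullet$ has no component of the form $\grg{q}\otimes\grg{p} \to \grg{q-1}\otimes\grg{p+1}$, so no higher differential of the $E$-spectral sequence can inject into the $E$-degree $-1$ slot from the $0$ slot.
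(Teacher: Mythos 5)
Your proposal is correct and follows essentially the same route as the paper: bound $\dim \Hch_1(E,\CA)=\dim H_1(C_\bullet)$ by the homology of the associated graded under the Hodge filtration $G$, then use the filtration $E$ to split that into the piece identified with $\HP_1(R_V)$ (Theorem \ref{thm:chiral.contains.HP}) and the piece identified with the Koszul homology $H_1(A,T)$, which vanishes by classical freeness via Theorem \ref{thm:classically.free.koszul}. The extra bookkeeping you flag (exhaustiveness, no higher differentials hitting the $E$-degree $-1$ slot) is handled in the paper by the same observation from the proof of Proposition \ref{prop:E.is.a.filtration}, and is in any case not needed for a mere dimension upper bound.
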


\begin{proof}
As usual the dimension of the homology of an associated graded complex is an upper bound on the dimension of the homology of the complex itself. Thus
\begin{align*}
\dim \Hch_1(E, \CA) \leq \dim \HP_1(R_V) + \dim H_1({}'C_\bullet).
\end{align*}
But as we have just seen, $H_1({}'C_\bullet)$ is isomorphic to the Koszul homology in degree $1$ of the differential algebra $A$. Since $V$ is classically free, this Koszul homology vanishes by Proposition \ref{thm:classically.free.koszul}.
\end{proof}


\begin{thebibliography}{10}

\bibitem{AN}
Toshiyuki Abe and Kiyokazu Nagatomo.
\newblock Finiteness of conformal blocks over compact {R}iemann surfaces.
\newblock {\em Osaka J. Math.}, 40(1):375--391, 2003.

\bibitem{Ab.Steg}
Milton Abramowitz and Irene Stegun.
\newblock {\em Handbook of Mathematical Functions with Formulas, Graphs, and
  Mathematical Tables. National Bureau of Standards Applied Mathematics Series
  55. Tenth Printing.}
\newblock ERIC, 1972.

\bibitem{AEH23}
George Andrews, Jethro~van Ekeren, and Reimundo Heluani.
\newblock The singular support of the {I}sing model.
\newblock {\em International Mathematics Research Notices},
  2023(10):8800--8831, 2023.

\bibitem{Arakawa.C2}
Tomoyuki Arakawa.
\newblock A remark on the {$C_2$}-cofiniteness condition on vertex algebras.
\newblock {\em Mathematische Zeitschrift}, 270(1):559--575, Feb 2012.

\bibitem{Arakawa.Kawasetsu}
Tomoyuki Arakawa and Kazuya Kawasetsu.
\newblock Quasi-lisse vertex algebras and modular linear differential
  equations.
\newblock In {\em Lie Groups, Geometry,
  and Representation Theory: A Tribute to the Life and Work of Bertram
  Kostant}, Victor~G Kac and Vladimir Popov (editors), volume 326 of {\em Progress in Mathematics}, pages 41--57.
  Birkh\"{a}user, 2018.

\bibitem{BDHK19}
Bojko Bakalov, Alberto De~Sole, Reimundo Heluani, and Victor~G Kac.
\newblock An operadic approach to vertex algebra and {P}oisson vertex algebra
  cohomology.
\newblock {\em Japanese Journal of Mathematics}, 14:249--342, 2019.

\bibitem{BDHKV21}
Bojko Bakalov, Alberto De~Sole, Reimundo Heluani, Victor~G Kac, and Veronica
  Vignoli.
\newblock Classical and variational {P}oisson cohomology.
\newblock {\em Japanese Journal of Mathematics}, 16(2):203--246, 2021.

\bibitem{BDK20}
Bojko Bakalov, Alberto De~Sole, and Victor~G Kac.
\newblock Computation of cohomology of {L}ie conformal and {P}oisson vertex
  algebras.
\newblock {\em Selecta Mathematica}, 26:1--51, 2020.

\bibitem{BDK21}
Bojko Bakalov, Alberto De~Sole, and Victor~G Kac.
\newblock Computation of cohomology of vertex algebras.
\newblock {\em Japanese Journal of Mathematics}, 16:81--154, 2021.

\bibitem{BDKV21}
Bojko Bakalov, Alberto~De Sole, Victor~G Kac, and Veronica Vignoli.
\newblock Poisson vertex algebra cohomology and differential {H}arrison
  cohomology.
\newblock In {\em Representation Theory, Mathematical Physics, and Integrable
  Systems: In Honor of Nicolai Reshetikhin}, pages 39--69. 2021.

\bibitem{BD}
Alexander Beilinson and Vladimir Drinfeld.
\newblock {\em Chiral algebras}, volume~51 of {\em Colloquium Publications}.
\newblock American Mathematical Society, 2004.

\bibitem{BL}
Francis Brown and Andrey Levin.
\newblock Multiple elliptic polylogarithms. \\
\newblock Retrieved from \texttt{https://arXiv:1110.6917}, 2011.

\bibitem{DGT}
Chiara Damiolini, Angela Gibney, and Nicola Tarasca.
\newblock On factorization and vector bundles of conformal blocks from vertex
  algebras.
\newblock {\em Annales scientifiques de l’\'{E}cole normale sup\'{e}rieure},
  2022.
\newblock To appear.

\bibitem{DSK.variational}
Alberto De~Sole and Victor~G Kac.
\newblock The variational {P}oisson cohomology.
\newblock {\em Japanese Journal of Mathematics}, 8(1):1--145, 2013.

\bibitem{Deligne}
Pierre Deligne.
\newblock Th{\'e}orie de {H}odge: {II}.
\newblock {\em Publications Math{\'e}matiques de l'IH{\'E}S}, 40:5--57, 1971.

\bibitem{DM06}
Chongying Dong and Geoffrey Mason.
\newblock Integrability of {$C_2$}-cofinite vertex operator algebras.
\newblock {\em International Mathematics Research Notices}, 2006:80468, 2006.

\bibitem{EH21}
Jethro~van Ekeren and Reimundo Heluani.
\newblock Chiral homology of elliptic curves and the {Z}hu algebra.
\newblock {\em Communications in Mathematical Physics}, 386(1):495--550, 2021.

\bibitem{FBZ}
Edward Frenkel and David Ben-Zvi.
\newblock {\em Vertex algebras and algebraic curves}.
\newblock Number~88 in Mathematical surveys and monographs. American
  Mathematical Soc., 2004.

\bibitem{kac.book}
Victor~G Kac.
\newblock {\em Vertex algebras for beginners}.
\newblock Number~10 in University lecture series. American Mathematical Soc.,
  1998.

\bibitem{KL1-4}
David Kazhdan and George Lusztig.
\newblock Tensor structures arising from affine {L}ie algebras. {I - IV}.
\newblock {\em Journal of the American Mathematical Society}, 6(4):905--947,
  1993.

\bibitem{Li.filtration}
Haisheng Li.
\newblock Vertex algebras and vertex {P}oisson algebras.
\newblock {\em Communications in Contemporary Mathematics}, 6(01):61--110,
  2004.

\bibitem{Li.abelianizing}
Haisheng Li.
\newblock Abelianizing vertex algebras.
\newblock {\em Communications in mathematical physics}, 259:391--411, 2005.

\bibitem{Loday.93}
Jean-Louis Loday.
\newblock Une version non commutative des algebres de {L}ie: les algebres de
  {L}eibniz.
\newblock {\em Recherche Coop{\'e}rative sur Programme n{${}^\circ$} 25},
  44:127--151, 1993.

\bibitem{Milnor-Stasheff}
John Milnor and James Stasheff.
\newblock {\em Characteristic classes}.
\newblock Number~76 in Annals of Mathematics Studies. Princeton university
  press, 1974.

\bibitem{Saito}
Kyoji Saito.
\newblock Theory of logarithmic differential forms and logarithmic vector
  fields.
\newblock {\em Journal of the Faculty of Science, the University of Tokyo.
  Sect. 1 A, Mathematics}, 27(2):265--291, 1980.

\bibitem{stacks-project}
The {Stacks Project Authors}.
\newblock \textit{Stacks Project}.
\newblock \url{https://stacks.math.columbia.edu}, 2018.

\bibitem{Totaro}
Burt Totaro.
\newblock Configuration spaces of algebraic varieties.
\newblock {\em Topology}, 35(4):1057--1067, 1996.

\bibitem{TUY}
Akihiro Tsuchiya, Kenji Ueno, and Yasuhiko Yamada.
\newblock Conformal field theory on universal family of stable curves with
  gauge symmetries.
\newblock In {\em Integrable Systems in Quantum Field Theory and Statistical
  Mechanics}, volume~19 of {\em Advanced Studies in Pure Mathematics}, pages
  459--566. Mathematical Society of Japan, 1989.

\bibitem{Voisin-vol1}
Claire Voisin.
\newblock Hodge theory and complex algebraic geometry {I}.
\newblock {\em Cambridge Studies in Advanced Mathematics}, 76(11), 2002.
\newblock Translated from the French original by Leila Schneps.

\bibitem{zhu96}
Yongchang Zhu.
\newblock Modular invariance of characters of vertex operator algebras.
\newblock {\em Journal of the American Mathematical Society}, 9(1):237--302,
  1996.

\end{thebibliography}
\end{document}